\newtheorem{theorem}{Theorem}[section]
\newtheorem{corollary}{Corollary}[section]
\newtheorem{lemma}{Lemma}[section]
\newtheorem{remark}{Remark}[section]
\numberwithin{equation}{section} \setlength\arraycolsep{1pt}
\title{\Large \bf Liouville-type theorems for the fourth order nonlinear elliptic equation\thanks{The work was partially supported
by NSFC of China (No. 11201248), K.C. Wong Fund of Ningbo University and Ningbo Natural Science Foundation (No. 2012A610031).}}
\author{{\small Liang-Gen Hu\footnote{email address: hulianggen@tom.com}}\\[0.05cm] {\small Department of Mathematics,
Ningbo University, 315211, P.R. China}}
\date{}
\begin{document}
\maketitle
\def\abstractname{}
\vspace*{-35pt}

\begin{abstract}
\noindent {\bf Abstract:} In this paper, we are concerned with Liouville-type theorems for the nonlinear
elliptic equation
\begin{equation*}
\Delta^2 u=|x|^a |u|^{p-1}u\;\ \mbox{in}\;\ \Omega,
\end{equation*}where $a \ge 0$, $p>1$ and $\Omega \subset \mathbb{R}^n$ is an unbounded domain of $\mathbb{R}^n$, $n \ge 5$.
We prove Liouville-type theorems for solutions belonging to one of the following classes: stable solutions and
finite Morse index solutions (whether positive or sign-changing). Our proof is based on a combination of the
{\it Pohozaev-type identity}, {\it monotonicity formula} of solutions and a {\it blowing down} sequence,
which is used to obtain sharper results.
\\ [0.2cm]
{\bf Keywords:} Liouville-type theorem; stable or finite Morse index solutions; monotonicity formula; blowing down sequence
\end{abstract}\vskip .2in

\section{Introduction}

The paper is devoted to the study of the following nonlinear fourth order elliptic equation
\begin{equation}\label{eq:1.1}
\Delta^2 u=|x|^a |u|^{p-1}u\;\ \mbox{in}\;\ \Omega
\end{equation}where $a \ge 0$, $p>1$ and $\Omega \subset \mathbb{R}^n$ is an unbounded domain of $\mathbb{R}^n$, $n \ge 5$.
We are interested in the Liouville-type theorems---i.e., the nonexistence of the solution $u$ which is stable or
finite Morse index, and the underlying domain $\Omega$
is an arbitrarily unbounded domain of $\mathbb{R}^n$.\vskip .1in

The idea of using the Morse index of a solution of a semilinear elliptic equation
was first explored by Bahri and Lions \cite{Bahri} to obtain further
qualitative properties of the solution. In 2007, Farina \cite{Farina} made significant progress,
and considered the Lane-Emden equation
\begin{equation}\label{eq:1.2}
- \Delta u =|u|^{p-1}u\;\ \mbox{in}\;\ \Omega,
\end{equation}on bounded and unbounded domains of $\Omega \subset \mathbb{R}^n$, with $n \ge 2$ and $p >1$. Farina completely
classified finite Morse index solutions (positive or sign-changing) in his seminal paper \cite{Farina}.
His proof makes a delicate application of the classical Moser iteration method. There exist many
excellent papers to use the generalization of Moser's iteration technique to discuss the harmonic and fourth-order
elliptic equation. We refer to \cite{Dancer-1,Hajlaoui,Wang,Wei,Wei-Ye}
and the reference therein.

However, the classical Moser's iterative technique may fail to obtain the similarly
complete classification for the biharmonic equation
\begin{equation}\label{eq:1.3}
\Delta ^2 u=|u|^{p-1}u\;\ \mbox{in} \;\ \Omega \subset \mathbb{R}^n.
\end{equation}
Recently, D\'{a}vila, Dupaigne, Wang and Wei \cite{Davila} have
derived a monotonicity formula for solutions of (\ref{eq:1.3}) to reduce the nonexistence of nontrivial entire solutions
for the problem (\ref{eq:1.3}), to that of nontrivial homogeneous solutions, and gave a complete classification of
stable solutions and those of finite Morse index solutions. We note that Pacard \cite{Pacard-1,Pacard-2}
studied the partial regularity results for stationary weak solution of $- \Delta u=u^p$ by the
use of monotonicity formula.\vskip .1in

Let us recall that for the Liouville-type theorems and properties of the {\bf subcritical case} has been extensively studied by many authors. Gidas and Spruck have been
investigated the optimal Liouville-type theorems in the celebrated paper \cite{Gidas}. Thus, the equation
(\ref{eq:1.2}) has no positive solution if and only if
\begin{equation*}
p<\dfrac{n+2}{n-2}\ (=+\infty,\ \ \mbox{if}\;\ n \le 2).
\end{equation*}The {\bf supercritical case} $p>\dfrac{n+2}{n-2}$  is much less
complete understood. Bidaut-V\'{e}ron and V\'{e}ron \cite{Bidaut} proved the asymptotic behavior of positive solution of (\ref{eq:1.2})
by the use of the Bochner-Lichnerowicz-Weitzenb\"{o}ck formula in $\mathbb{R}^n$.\vskip .1in

On the other hand, that the understanding of the {\bf case $a \neq 0$} is less complete and is more delicate
to handle than the case $a=0$. In \cite{Gidas},
Gidas and Spruck concluded that for $a \le -2$, the equation
\begin{equation}\label{eq:1.4}
- \Delta u=|x|^a u^p, \; \mbox{in}\;\ \Omega
\end{equation}has no positive solution in any domain $\Omega$ containing the origin. Recently,
Dancer, Du and Guo \cite{Dancer-1} have researched the asymptotical behavior of stable and finite Morse
index solutions of (\ref{eq:1.4}), where $a>-2$ and $p<\overline{p}(a^-)$ ($\overline{p}(0)$), $a^-=\min\{0,a\}$.

The {\bf case} $a>0$ seems some difficult. Since the classical techniques and many properties may fail to deal with the corresponding equations.
In 2012,  Phan and Souplet \cite{Phan} used the delicate method in \cite{Souplet}
to prove that if $n \ge 2$, $a>0$, $1<p<\dfrac{n+2+2a}{n-2}$ and $n=3$, then the equation (\ref{eq:1.4})
has no positive bounded solution in $\Omega=\mathbb{R}^n$. Meanwhile, adopting the similar method, Fazly and Ghoussoub proved the following
result:\vskip .1in

\noindent {\bf Theorem A.} (\cite[Theorem 3]{Falzy}) {\it Let $n \ge 5$, $a \ge 0$ and $p>1$. Then for any Sobolev subcritical exponent, i.e.,
\begin{equation*}
1 < p < \dfrac{n+4+2a}{n-4},
\end{equation*}the equation (\ref{eq:1.1}) has no positive solution with finite Morse index.
}\vskip 0.08in

Inspired by the ideas in \cite{Davila,Pacard-1}, our purpose in this paper is to prove the Liouville-type theorems
in the class of stable solution and finite Morse index solution. Thus for any fixed $a \ge 0$ and $n\ge 3$, we get
\vskip .1in

\begin{theorem}\label{eq:t1.1}
If $u$ is a smooth stable solution of (\ref{eq:1.1}) in $\mathbb{R}^n$ and $1<p<p_a(n)$,
then $u \equiv 0$.
\end{theorem}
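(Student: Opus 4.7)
\medskip
\noindent\textbf{Proof proposal.} My plan is to adapt the three-ingredient strategy of D\'avila--Dupaigne--Wang--Wei \cite{Davila}---stability estimates, a monotonicity formula, and a blowing-down sequence---to the weighted equation (\ref{eq:1.1}). The starting point is the stability inequality: since $u$ is stable, every $\varphi \in C_c^\infty(\mathbb{R}^n)$ satisfies
\begin{equation*}
p \int_{\mathbb{R}^n} |x|^a |u|^{p-1} \varphi^2\,dx \;\le\; \int_{\mathbb{R}^n} (\Delta \varphi)^2\,dx.
\end{equation*}
I would test this with $\varphi = |u|^{(q-1)/2} u\, \eta^m$ for a cutoff $\eta$ supported in $B_{2R}\setminus\{0\}$ and an appropriately large exponent $m$, together with the equation multiplied by $|u|^{q-1}u\,\eta^{2m}$. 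Combining the two identities and absorbing gradient and Laplacian terms via Cauchy--Schwarz (this is the Cowan--Esposito--Ghoussoub/Farina-type bootstrap, tailored to the biharmonic setting), the goal is the scale-invariant estimate
\begin{equation*}
\int_{B_R} |x|^{a} |u|^{p+q} \, dx \;\le\; C\, R^{n - \frac{(4+a)(p+q)}{p-1}}
\end{equation*}
valid for every $q$ in a range $[1,q^*(p,n,a))$ dictated by the stability condition. The subcritical hypothesis $p<p_a(n)$ should be precisely what guarantees that $q=1$ lies strictly inside this admissible range, and hence that the exponent in $R$ is strictly negative for some admissible $q$.

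Next I would introduce the natural rescaling $u^\lambda(x):=\lambda^{(4+a)/(p-1)} u(\lambda x)$, which leaves (\ref{eq:1.1}) invariant, and construct a monotonicity formula for
\begin{equation*}
E(r;u) \;:=\; r^{\,\frac{4(p+1)+2a(p+1)}{p-1}-n}\!\!\int_{B_r}\!\!\Big(\tfrac12 (\Delta u)^2 - \tfrac{|x|^a}{p+1}|u|^{p+1}\Big) + \text{lower-order boundary corrections},
\end{equation*}
the corrections being chosen (as in \cite{Davila}) so that the $r$-derivative of $E$ becomes a non-negative quadratic form in the transversal derivatives of $u$ on $\partial B_r$. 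The computation rests on a Pohozaev-type identity obtained by testing (\ref{eq:1.1}) against $x\cdot\nabla u + \frac{4+a}{p-1}u$ on $B_r$; the weight $|x|^a$ contributes a term that is absorbed by the choice of rescaling exponent and is, I expect, the main new technical point relative to the unweighted setting.

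With these two ingredients in hand, the endgame is standard in spirit. The scale-invariant estimate guarantees that the blown-down family $\{u^\lambda\}_{\lambda\ge 1}$ is uniformly bounded in $L^{p+1}_{\rm loc}(\mathbb{R}^n,|x|^a dx)$, and, together with interior elliptic estimates for $\Delta^2$, yields $C^4_{\rm loc}$ convergence along a subsequence to some $u^\infty$. Monotonicity of $E$ combined with the $R\to\infty$ decay gives $E(r;u^\infty)\equiv \mathrm{const}$, which, through the non-negative quadratic form in the derivative of $E$, forces $u^\infty$ to be homogeneous of degree $-(4+a)/(p-1)$. Stability is preserved in the limit, so $u^\infty$ is a homogeneous stable solution on $\mathbb{R}^n\setminus\{0\}$. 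The final step is to classify these: by separating variables and analyzing the resulting ODE/eigenvalue problem on $S^{n-1}$ (the weight contributes only through the radial factor, so the spherical computation parallels \cite{Davila}), one shows that for $1<p<p_a(n)$ no nontrivial homogeneous stable solution exists, whence $u^\infty\equiv 0$, $E(0^+;u)=0$, monotonicity forces $E(\cdot;u)\equiv 0$, and a second (centered) blow-down, identical in spirit, returns $u\equiv 0$.

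\medskip
\noindent\textbf{Expected main obstacle.} The principal difficulty is the derivation of the monotonicity formula in the presence of the weight $|x|^a$: the extra factor breaks the clean scaling that underlies the $a=0$ computation of \cite{Davila}, and one must track carefully how the Pohozaev multiplier $x\cdot\nabla u + \frac{4+a}{p-1}u$ interacts with $\int |x|^a|u|^{p+1}$ so that the remaining boundary terms can be re-encoded as an $r$-derivative. Verifying that the resulting $\frac{d}{dr}E$ is non-negative (not merely a complete derivative) is where the exponent $p_a(n)$ will ultimately be pinned down.
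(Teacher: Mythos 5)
Your overall architecture coincides with the paper's: a scale-invariant energy estimate from stability, a monotonicity formula for a rescaled energy, a blow-down to a homogeneous stable solution, and a classification of homogeneous stable solutions on $\mathbb{S}^{n-1}$ that pins down $p_a(n)$. Two points, however, need concrete correction.

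First, the Farina/Cowan--Esposito--Ghoussoub bootstrap with test functions $|u|^{(q-1)/2}u\,\eta^m$ for $q>1$ is exactly the step that is known to break down for $\Delta^2$: there is no usable chain rule for $\Delta$ of a power of $u$, and the expansion of $\bigl(\Delta(|u|^{(q-1)/2}u\,\eta^m)\bigr)^2$ produces terms such as $|u|^{q-3}u^2|\nabla u|^4$ and $|u|^{q-2}|\nabla u|^2\,u\,\Delta u$ that cannot be absorbed by Cauchy--Schwarz; this is precisely why the paper (following D\'avila--Dupaigne--Wang--Wei) abandons Moser-type iteration. The paper uses only the $q=1$ estimate (Lemma \ref{eq:l2.2}): testing the equation with $u\psi^{2m}$ and stability with $u\psi^{m}$, combined through the algebraic identities of Lemma \ref{eq:l2.1}, gives $\int_{B_R}(|\Delta u|^2+|x|^a|u|^{p+1})\le CR^{\,n-\frac{4(p+1)+2a}{p-1}}$. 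Note also that the correct scale-invariant exponent carries $2a$, not $a(p+1)$, because the weight $|x|^a$ sits in the measure; the same correction applies to the prefactor in your definition of $E$. Fortunately your blow-down argument only needs the $q=1$ bound for local uniform boundedness of $u^{\lambda}$, so the higher-$q$ bootstrap can simply be dropped.

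Second, you misplace where $p_a(n)$ enters. The exponent $n-\frac{4(p+1)+2a}{p-1}$ is negative iff $p<\frac{n+4+2a}{n-4}$ (Sobolev subcritical), not iff $p<p_a(n)$; the direct decay argument therefore disposes only of the subcritical case (and, with an extra annulus/H\"older argument, the critical case). The condition $p<p_a(n)$ appears solely in the classification of homogeneous stable solutions: writing $u=r^{-(4+a)/(p-1)}w(\theta)$ and comparing the equation on $\mathbb{S}^{n-1}$ with the stability inequality tested on $r^{-(n-4)/2}w(\theta)\zeta_{\epsilon}(r)$ (i.e.\ against the Hardy--Rellich constant $\frac{n^2(n-4)^2}{16}$) yields $w\equiv 0$ exactly when $p\ell_1>\frac{n(n-4)}{2}$ and $p\ell_2>\frac{n^2(n-4)^2}{16}$, which is where the quartic defining $p(n,a)$ originates. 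Relatedly, the nonnegativity of $\frac{d}{dr}E$ (via $\rho-\gamma-1>0$) holds only for $p\ge\frac{n+4+2a}{n-4}$, so the proof must be split into subcritical, critical, and supercritical cases rather than running the monotonicity-plus-blow-down machine over the whole interval $(1,p_a(n))$. With these repairs your outline matches the paper's proof.
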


\begin{theorem}\label{eq:t1.2}
Let $u$ be a smooth solution to (\ref{eq:1.1}) with finite Morse index.
\begin{itemize}
\item If $p \in (1, p_a(n))$, $p \neq \dfrac{n+4+2a}{n-4}$, then $u \equiv 0$.
\item If $p=\dfrac{n+4+2a}{n-4}$, then $u$ has finite energy, i.e.,
\begin{equation*}
\int_{\mathbb{R}^n} \left (\Delta u \right )^2 =\int_{\mathbb{R}^n} |x|^{\alpha} |u|^{p+1} <+\infty.
\end{equation*}
\end{itemize}
\end{theorem}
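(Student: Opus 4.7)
The plan is to combine the Morse-index hypothesis with Theorem~\ref{eq:t1.1} via a stability-at-infinity reduction, the Pohozaev-type identity, and---in the supercritical regime---the monotonicity formula together with a blow-down argument. First I reduce to stability outside a large ball: since $u$ has finite Morse index $k$, one selects a $k$-dimensional maximal negative-definite subspace of the quadratic form $Q_u(\varphi)=\int(\Delta\varphi)^2-p\int |x|^a|u|^{p-1}\varphi^2$ spanned by functions compactly supported in some $B_{R_0}$, so that $Q_u(\varphi)\ge 0$ for every $\varphi\in C_c^\infty(\mathbb R^n\setminus B_{R_0})$. Thus $u$ is stable outside a large ball, and the integral machinery employed in Theorem~\ref{eq:t1.1} becomes available as long as all cut-off functions are supported away from $B_{R_0}$.

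For $1<p<(n+4+2a)/(n-4)$ (subcritical) I repeat the iteration behind Theorem~\ref{eq:t1.1} with cut-offs supported in $\mathbb R^n\setminus B_{2R_0}$, thereby obtaining $\int_{\mathbb R^n}|x|^a|u|^{p+1}<\infty$ and in fact decay of $\int_{|x|>R}|x|^a|u|^{p+1}$ as $R\to\infty$. The Pohozaev-type identity then contributes a bulk term proportional to $\bigl(\tfrac{n+a}{p+1}-\tfrac{n-4}{2}\bigr)\int_{\mathbb R^n}|x|^a|u|^{p+1}$, whose coefficient is strictly positive in the subcritical range; choosing radii along which the boundary integrals vanish (possible by the just-obtained global integrability) forces the bulk term to be zero, hence $u\equiv 0$. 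At the critical exponent $p=(n+4+2a)/(n-4)$ the same iteration delivers $\int|x|^a|u|^{p+1}<\infty$ and $\int(\Delta u)^2<\infty$; testing \eqref{eq:1.1} against a suitable cut-off of $u$ and passing to the limit---the boundary terms now vanish by the finite-energy control---yields the required identity $\int(\Delta u)^2=\int|x|^a|u|^{p+1}$.

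For $(n+4+2a)/(n-4)<p<p_a(n)$ (supercritical) I introduce the blow-down $u^\lambda(x)=\lambda^{(4+a)/(p-1)}u(\lambda x)$, which again solves \eqref{eq:1.1} in $\mathbb R^n\setminus\{0\}$ and is stable on $\mathbb R^n\setminus B_{R_0/\lambda}$, the excluded ball shrinking to $\{0\}$ as $\lambda\to\infty$. The monotonicity formula established earlier in the paper shows that $\lambda\mapsto E(u,\lambda)=E(u^\lambda,1)$ is non-decreasing and yields a uniform $W^{2,2}_{\mathrm{loc}}(\mathbb R^n\setminus\{0\})$ bound on $u^\lambda$ together with finiteness of $\int_1^\infty \dot E(u,\lambda)\,d\lambda$. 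A subsequential limit $u^\infty$ is therefore homogeneous of degree $-(4+a)/(p-1)$, and it is stable on all of $\mathbb R^n$ by lower semicontinuity of the stability quadratic form (the excluded balls have shrunk to the origin), so Theorem~\ref{eq:t1.1} forces $u^\infty\equiv 0$. Hence $\lim_{\lambda\to\infty}E(u,\lambda)=0$; combined with monotonicity and with $E(u,0^+)=0$ (valid because $u$ is classical at the origin), this forces $E(u,\lambda)\equiv 0$, so $u$ itself is homogeneous of the critical degree, and a second appeal to Theorem~\ref{eq:t1.1} concludes $u\equiv 0$.

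The main obstacle is the blow-down step in the supercritical regime: one has to establish enough compactness of $\{u^\lambda\}$ that the weak limit genuinely solves \eqref{eq:1.1} (in particular handling the weight $|x|^a$ both near the origin and at infinity), verify that the monotone-energy sandwich closes via $E(u,0^+)=0$, and confirm lower semicontinuity of the stability quadratic form along the blow-down sequence. The subcritical and critical steps reduce to more mechanical---though weight-laden---iteration and cut-off computations, presenting no conceptual difficulty beyond those already met in the proof of Theorem~\ref{eq:t1.1}.
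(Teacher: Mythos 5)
Your overall architecture matches the paper's: reduce finite Morse index to stability outside a compact set, handle the subcritical and critical exponents with the Pohozaev identity plus integral estimates, and handle the supercritical range with the monotonicity formula and a blow-down. However, there are two genuine gaps. The most serious is in the supercritical case: you classify the blow-down limit $u^{\infty}$ by ``a second appeal to Theorem \ref{eq:t1.1}.'' That cannot work as stated. The limit $u^{\infty}$ is homogeneous of degree $-\frac{4+a}{p-1}<0$, hence singular at the origin unless it vanishes, whereas Theorem \ref{eq:t1.1} concerns \emph{smooth} stable solutions on all of $\mathbb{R}^n$; moreover, in the supercritical range the proof of Theorem \ref{eq:t1.1} itself already rests on classifying homogeneous limits, so the appeal would be circular. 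The ingredient you are missing is the paper's Theorem \ref{eq:t3.1}: nonexistence of homogeneous stable solutions in $W^{2,2}_{loc}(\mathbb{R}^n\setminus\{0\})$ for $\frac{n+4+2a}{n-4}<p<p_a(n)$, proved by separating variables $u=r^{-\frac{4+a}{p-1}}w(\theta)$, testing stability against $r^{-\frac{n-4}{2}}w(\theta)\zeta_{\epsilon}(r)$, and comparing the resulting spherical inequality with the equation for $w$. This step is where the threshold $p_a(n)$ actually enters, and your outline never produces it. (Your ``lower semicontinuity gives stability on all of $\mathbb{R}^n$'' does not repair this, since the obstruction is smoothness at the origin, not the class of admissible test functions.)

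The second gap is in the subcritical case. The Pohozaev bulk term is $\int \bigl(\frac{n-4}{2}(\Delta u)^2-\frac{n+a}{p+1}|x|^a|u|^{p+1}\bigr)$, not a multiple of $\int|x|^a|u|^{p+1}$ alone; to reach your claimed coefficient $\frac{n+a}{p+1}-\frac{n-4}{2}$ you also need the energy identity $\int(\Delta u)^2=\int|x|^a|u|^{p+1}$, which the paper proves separately by testing the equation against $u\zeta_R$ and killing the error terms $S_1,S_2$ with H\"older and decay estimates. Relatedly, ``choosing radii along which the boundary integrals vanish'' using only the finiteness of $\int\bigl((\Delta u)^2+|x|^a|u|^{p+1}\bigr)$ does not control the Pohozaev boundary terms $R\,\frac{\partial u}{\partial r}\frac{\partial \Delta u}{\partial r}$ and $\Delta u\,\frac{\partial(x\cdot\nabla u)}{\partial r}$ in Lemma \ref{eq:l5.1}, which involve third derivatives. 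The paper instead derives pointwise decay $|x|^{\frac{4+a}{p-1}+k}|\nabla^k u|\le C$ for $k\le 3$ from the doubling lemma plus interior elliptic estimates (Lemma \ref{eq:l5.2}) and uses that to send the entire boundary integral to zero. With these two ingredients supplied, the rest of your outline (including the critical-case finite-energy identity and the monotone-energy sandwich $E(0^+)=\lim_{r\to\infty}E=0$) aligns with the paper's proof.
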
\vskip .1in

\noindent Here the representation of $p_a(n)$ in Theorem \ref{eq:t1.1} and \ref{eq:t1.2} is given by (\ref{eq:2.01}) below.\vskip .1in

\begin{remark}
\begin{itemize}
\item [\rm (1)]  Let us note that for any fixed $a \ge 0$ and
$\dfrac{n+4+4a}{n-4}<p<p_a(n)$, we adopt a new method---a combination of monotonicity formula and blowing down sequence---
to deal with the case and get Liouville-type theorem.
\item [\rm (2)]For the subcritical and critical cases, the proof is based on the combination of the {\it Pohozaev
identity} with some integral and pointwise estimates obtained by the doubling lemma in \cite[Lemma 5.1]{Polacik}.
\item [\rm (3)] In contrast with the results of \cite{Falzy}, our result is extended to the larger interval
$\left (1,p_a(n) \right )$ and the proof of method is different and independent interesting. For equation (\ref{eq:1.1}),
we do not impose any sign condition for $u$ and extra restrictions on $n$, $a$ and $p$.
\end{itemize}
\end{remark}

To describe our results more accurately, we need to make precise several terminologies.

\begin{itemize}
\item {\bf Definition.}\hspace*{5pt} We recall that a critical point $u \in C^2 (\Omega)$ of the energy functions
\begin{equation*}
\mathcal{L}(u)=\int_{\Omega} \dfrac{1}{2} |\Delta u|^2dx -\dfrac{1}{p+1} \int_{\Omega} |x|^a |u|^{p+1}dx
\end{equation*}is said to be
\begin{itemize}
\item [\rm (i)] a stable solution of (\ref{eq:1.1}), if for any $\psi \in C_0^4( \Omega )$, we have
\begin{equation*}
\mathcal{L}_{uu}(\psi):=\int_{\Omega} |\Delta \psi|^2dx - p\int_{\Omega} |x|^a |u|^{p-1} \psi^2 dx \ge 0.
\end{equation*}
\item [\rm (ii)] a solution $u$ of (\ref{eq:1.1}) with a Morse index equal to $l \ge 0$, if $l$ is the maximal dimension of
a subspace $X_l$ of $C_0^1(\Omega)$ such that $\mathcal{L}_{uu}(\psi) <0$ for all $\psi \in X_l \backslash \{0\}$. Therefore, $u$
is stable if and only if its Morse index is equal to zero.
\item [\rm (iii)] a stable solution $u$ of (\ref{eq:1.1}) outside a compact set $\Gamma \subset \Omega$, if $\mathcal{L}_{uu}
(\psi) \ge 0$ for any $\psi \in C_0^1(\Omega \backslash \Gamma )$. It follows that any finite Morse index
solution $u$ is stable outside some compact set $\Gamma \subset \Omega$.
\end{itemize}
\item {\bf Notation.}\hspace*{5pt} Here and in the following, we use $B_r(x)$ to denote the open ball on $\mathbb{R}^n$ central at $x$
with radius $r$. we also write $B_r=B_r(0)$. $C$ denotes various irrelevant positive constants.
\end{itemize}

The organization of rest of the paper is as follows. In section 2, we construct a monotonicity formula which is a crucial tool
to handle the supercritical case,
and derive various integral estimates. Then we prove Liouville-type theorem for stable solutions of (\ref{eq:1.1}), this is
Theorem \ref{eq:t1.1} in Section 3. To prove the result, we first obtain the nonexistence of homogeneous, stable solution of (\ref{eq:1.1})
in $\mathbb{R}^n \backslash \{0 \}$, where $p$ belongs to $\left ( \dfrac{n+4+2a}{n-4}, p_a(n) \right )$ (the representation of $p_a(n)$
in the below (\ref{eq:2.01})). Secondly, we obtain some estimates of solutions, and show that
the limit of blowing down sequence $u^{\infty}(x)=\lim\limits_{\tau \to \infty} \tau^{\frac{4+a}{p-1}} u(\tau x)$
satisfies $E(r;0,u) \equiv  \mbox{const}$. Here, we use the monotonicity formula of Theorem \ref{eq:t2.1}.
In Section 4, we study Liouville-type theorem of finite Morse index solutions by the use of the {\it Pohozaev-type
identity}, {\it monotonicity formula} and {\it blowing down} sequence.

\section{A Monotonicity formula and some estimates}

In this section, we construct a monotonicity formula which play an important role in dealing with the supercritical case,
and obtain various integral estimates of stable solutions.

To explore the main results in this paper, we need to define a {\it critical power} of (\ref{eq:1.1}).
For any fixed $a \ge 0$ and $n \ge 5$, we define the functions by
\begin{align*}
g(p) : & = p \left (\dfrac{4+a}{p-1}+2 \right )\left (n-4-\dfrac{4+a}{p-1} \right )+p\dfrac{4+a}{p-1} \left (n-2-\dfrac{4+a}{p-1} \right ),\\[0.09cm]
f(p) : & = p \dfrac{4+a}{p-1}\left (\dfrac{4+a}{p-1}+2 \right ) \left (n-4-\dfrac{4+a}{p-1}
\right ) \left (n-2-\dfrac{4+a}{p-1} \right ).
\end{align*}A direct computation finds
\begin{align*}
g \left (\dfrac{n+4+2a}{n-4} \right ) & =\dfrac{n+4+2a}{n-4}\times \dfrac{n(n-4)}{2} >\dfrac{n(n-4)}{2},\\[0.09cm]
f \left (\dfrac{n+4+2a}{n-4} \right ) & =\dfrac{n+4+2a}{n-4}\times \dfrac{n^2(n-4)^2}{16}>\dfrac{n^2(n-4)^2}{16}.
\end{align*}and differentiating the function $f(p)$ in $p$, we get
\begin{align*}
f'(p) = & 2p\dfrac{(4+a)^2}{(p-1)^3} \left (\dfrac{4+a}{p-1}+2 \right ) \left [ n-3-\dfrac{4+a}{p-1}\right ] \\[0.06cm]
& -\dfrac{4+a}{(p-1)^2}\left (6+a+\dfrac{8+2a}{p-1} \right ) \left (n-4-\dfrac{4+a}{p-1} \right )\left (n-2-\dfrac{4+a}{p-1} \right ).
\end{align*}It is easy to check that
\begin{equation*}
f'\left (\dfrac{n+4+2a}{n-4} \right )=\dfrac{n^2(n-4)^2}{16}>0.
\end{equation*}

Let $n(a)$ be the integer part of the largest real root of the algebra equation
\begin{equation*}
x^3-4x^2-32(a+4)x+64a+256=0,
\end{equation*}and $p(n,a)$ be the largest real root of the algebra equation
\begin{align*}
&\Big [ n^4-8n^3-16(2a+7)n^2+192(a+4)n-256(a+4)\Big ] x^4\\[0.06cm]
& -4 \Big [ n^4 + 8 n^3+ 4\big(a^2+ 2 a - 4\big) n^2 - 8\big( 5a^2 + 22 a + 8\big)n+16\big(5 a^2 + 28 a + 32 \big) \Big ] x^3 \\[0.06cm]
& +2 \Big [ 3 n^4- 24 n^3+ 16\big(a^2 + 5 a +7\big) n^2+ 16\big(a^3 + 2 a^2 - 14 a -24\big) n -64 \big(a^3 + 7 a^2+14 a +8\big) \big ] x^2 \\[0.06cm]
& -4 \Big [ n^4-8 n^3 + 4\big(a^2 + 6 a +12\big) n^2 + 8\big(a^3 + 7 a^2+ 14 a + 8\big) n +4a\big(a^3 + 8 a^2 + 20 a +16\big) \Big ] x\\[0.06cm]
& +n^4-8 n^3+16 n^2=0.
\end{align*}For any fixed $a \ge 0$ and $n \ge 5$, we define
\begin{equation}\label{eq:2.01}p_a(n)=
\begin{cases}
+\infty, & \mbox{if}\;\ n \le n(a),\\
p(n,a), & \mbox{if}\;\ n \ge n(a)+1.
\end{cases}
\end{equation}Therefore, we find
\begin{equation*}
f(p) >\dfrac{n^2(n-4)^2}{16},
\end{equation*}
for any $\dfrac{n+4+2a}{n-4} <p <p_a(n)$.

In particular, if $a=0$, then $p_0(n)$ in (\ref{eq:2.01}) is the fourth order Joseph-Lundgren exponent which
is computed by Gazzola and Grunau \cite{Gazzola}.

Furthermore, using the inequality $x+y\ge 2\sqrt{xy}$, for all $x,y\ge 0$,
and combining with the definition of the functions $g(p)$ and $f(p)$, we obtain
\begin{equation*}
g(p)>\dfrac{n(n-4)}{2},
\end{equation*}for any $\dfrac{n+4+2a}{n-4} <p <p_a(n)$. \vskip .1in

For any given $x\in \Omega$, let $0<r<R$ and $B_r(x) \subset B_R(x) \subset \Omega$, we choose
$u \in W^{4,2}_{loc}(\Omega)$ and $|x|^a|u|^{p+1} \in L_{loc}^1(\Omega)$ and define
\begin{align}\label{eq:2.02}
E(r; x,u):= & r^{\frac{4(p+1)+2a}{p-1}-n} \int_{B_r(x)} \dfrac{1}{2} (\Delta u)^2-\dfrac{1}{p+1} |x|^a |u|^{p+1} \nonumber \\[0.06cm]
& +\dfrac{4+a}{2(p-1)} \left (n-2-\dfrac{4+a}{p-1} \right )\dfrac{d}{d r} \left (r^{\frac{8+2a}{p-1}+1-n}\int_{\partial B_r(x)} u^2 \right ) \nonumber \\[0.06cm]
& +\dfrac{4+a}{2(p-1)} \left (n-2-\dfrac{4+a}{p-1} \right )\dfrac{d}{dr} \left (
r^{\frac{8+2a}{p-1}+2-n}\int_{\partial B_r(x)} u^2 \right ) \nonumber \\[0.06cm]
& +\dfrac{r^3}{2} \dfrac{d}{dr} \left [ r^{\frac{8+2a}{p-1}+1-n}\int_{\partial B_r(x)}
\left (\dfrac{4+a}{p-1} r^{-1} u +\dfrac{\partial u}{\partial r} \right )^2\right ] \nonumber \\[0.06cm]
& +\dfrac{1}{2} \dfrac{d}{dr}\left [r^{ \frac{8+2a}{p-1}+4-n}\int_{\partial B_r(x)}\left (|\nabla u|^2-\left |\dfrac{\partial u}{\partial r} \right |^2
\right ) \right ] \nonumber \\[0.06cm]
& +\dfrac{1}{2}r^{ \frac{8+2a}{p-1}+3-n}\int_{\partial B_r(x)}\left (|\nabla u|^2-\left |\dfrac{\partial u}{\partial r} \right |^2
\right ).
\end{align}Then, we can investigate a monotonicity formula.

\begin{theorem}\label{eq:t2.1}
Suppose that $n\ge 5$, $a \ge 0$ and $p > \dfrac{n+4+2a}{n-4}$, $u \in W^{4,2}_{loc} (\Omega)$ and $|x|^a |u|^{p+1} \in
L_{loc}^1(\Omega)$
is a weak solution of (\ref{eq:1.1}). Then $E(r;x,u)$ is non-decreasing in $r \in (0,R)$. Furthermore, we have
\begin{equation}\label{eq:2.03}
\dfrac{d}{dr}E(r;0,u) \ge c(n,p,a) r^{-n+2+\frac{8+2a}{p-1}}\int_{\partial B_r}
\left (\dfrac{4+a}{p-1} r^{-1} u+\dfrac{\partial u}{\partial r}
\right )^2 dS,
\end{equation}where the constant $c(n,p,a) >0$ is only relevant to $n$, $p$ and $a$.
\end{theorem}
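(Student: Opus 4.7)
My approach follows the framework of Pacard and of D\'avila--Dupaigne--Wang--Wei for related biharmonic problems: exploit the scale invariance of (\ref{eq:1.1}) to view $E(r;x,u)$ as the energy of a rescaled function on a fixed domain, plus correction boundary terms chosen precisely to cancel the boundary debris produced by integration by parts when one differentiates in $r$. Translation invariance reduces matters to $x=0$.

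Setting $\alpha:=\frac{4+a}{p-1}$, I introduce the rescaled function $u^{\lambda}(y):=\lambda^{\alpha}u(\lambda y)$, which again solves (\ref{eq:1.1}) thanks to the choice of $\alpha$. A direct change of variable identifies the bulk integral in (\ref{eq:2.02}) with the scale-invariant energy of $u^{r}$ on $B_1$:
\[
r^{\frac{4(p+1)+2a}{p-1}-n}\int_{B_r}\Bigl[\tfrac12(\Delta u)^2-\tfrac{|x|^a}{p+1}|u|^{p+1}\Bigr]dx
=\int_{B_1}\Bigl[\tfrac12(\Delta u^{r})^2-\tfrac{|y|^a}{p+1}|u^{r}|^{p+1}\Bigr]dy,
\]
and each of the five boundary terms in (\ref{eq:2.02}) likewise rewrites as an integral over $\partial B_1$ of $u^{r}$ and its derivatives. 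This is the natural setting for the differentiation.

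Next, I compute $\frac{d}{dr}E(r;0,u)$. Using $\partial_r u^{r}=\tfrac{1}{r}(\alpha u^{r}+y\cdot\nabla_y u^{r})$, a double integration by parts combined with $\Delta^2 u^{r}=|y|^a|u^{r}|^{p-1}u^{r}$ collapses the interior contribution and leaves only
\[
\int_{\partial B_1}\Bigl[\Delta u^{r}\,\partial_\nu(\partial_r u^{r})-\partial_\nu(\Delta u^{r})\,\partial_r u^{r}\Bigr]dS.
\]
Writing $\Delta u^{r}=\partial_\rho^{2}u^{r}+\frac{n-1}{\rho}\partial_\rho u^{r}+\frac{1}{\rho^{2}}\Delta_\sigma u^{r}$ on $\partial B_1$ (with $\rho=|y|$ and $\Delta_\sigma$ the Laplace--Beltrami operator on $S^{n-1}$), these boundary terms split into pure radial, mixed and tangential pieces. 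Each of the five correction terms in (\ref{eq:2.02}) is already in the form $\frac{d}{dr}[r^{\mathrm{power}}\int_{\partial B_r}(\cdot)]$, so its contribution to $E'(r)$ is a second $r$-derivative whose spherical expansion generates exactly the non-square pieces needed to cancel the mixed and tangential contributions above; the coefficients $\frac{4+a}{2(p-1)}(n-2-\frac{4+a}{p-1})$, $\frac{r^{3}}{2}$, $\frac12$ in (\ref{eq:2.02}) are uniquely tuned for this cancellation.

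What survives is a strictly positive multiple of $\int_{\partial B_1}(\alpha u^{r}+\rho\,\partial_\rho u^{r})^{2}\,dS$, which, translated back to $u$, is exactly the right-hand side of (\ref{eq:2.03}) with prefactor $r^{-n+2+\frac{8+2a}{p-1}}$; the constant $c(n,p,a)$ packages the algebraic factors $g(p)-\frac{n(n-4)}{2}$ and $f(p)-\frac{n^{2}(n-4)^{2}}{16}$ introduced at the start of Section~2, both strictly positive under the hypothesis $p>\frac{n+4+2a}{n-4}$. \textbf{The main obstacle} is purely combinatorial: verifying that the five correction terms in (\ref{eq:2.02}) really do annihilate every cross-term generated by the two integrations by parts, leaving a lone square. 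The hypothesis $p>\frac{n+4+2a}{n-4}$ is precisely what forces $(n-4-\alpha)(n-2-\alpha)$ and $p\alpha(\alpha+2)$ to produce a positive prefactor in front of this square; at the critical exponent it would degenerate. Once this bookkeeping is complete, the pointwise inequality (\ref{eq:2.03}) and hence monotonicity follow immediately.
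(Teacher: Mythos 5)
Your overall strategy coincides with the paper's: rescale to $u^{\tau}(y)=\tau^{\frac{4+a}{p-1}}u(\tau y)$, write the bulk energy as a scale-invariant integral over $B_1$, differentiate in the scaling parameter, integrate by parts twice using the equation to reduce everything to $\partial B_1$, and recognize the correction terms in (\ref{eq:2.02}) as exact derivatives that absorb the cross-terms; this is exactly the computation carried out in (\ref{eq:2.05})--(\ref{eq:2.12}). But there is a concrete error in your plan, and it sits precisely at the point where positivity is supposed to come from. You attribute the positivity of $c(n,p,a)$ to the quantities $g(p)-\frac{n(n-4)}{2}$ and $f(p)-\frac{n^{2}(n-4)^{2}}{16}$. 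Those are the Joseph--Lundgren-type conditions defining $p_a(n)$, and they enter only in Theorem \ref{eq:t3.1} (nonexistence of homogeneous stable solutions), not in the monotonicity formula. In the actual computation the surviving coefficient is $c(n,p,a)=2(\rho-\gamma-1)$ with $\rho=n-1-\frac{8+2a}{p-1}$ and $\gamma=\frac{4+a}{p-1}\left(\frac{4+a}{p-1}-n+2\right)$, and its positivity is the elementary inequality $\rho-\gamma>1$, valid for all $p\ge\frac{n+4+2a}{n-4}$; neither stability nor $p<p_a(n)$ is used. Consequently your assertion that the formula ``degenerates at the critical exponent'' is false: at $p=\frac{n+4+2a}{n-4}$ one gets $c=\frac{n^{2}-4n+8}{2}>0$, which is the content of the paper's Remark 2.1 and is needed later for the critical case of Theorem \ref{eq:t1.2}.

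Beyond that, the step you defer as ``bookkeeping'' is the entire proof, and its outcome is not quite what you describe. After substituting $v^{\tau}=\tau^{2}\frac{d^{2}u^{\tau}}{d\tau^{2}}+\rho\tau\frac{du^{\tau}}{d\tau}+\gamma u^{\tau}+\Delta_{\theta}u^{\tau}$ into the boundary expression, the result is not a cancellation down to ``a lone square'' but a decomposition into exact $\tau$-derivatives (which are what the five correction terms in (\ref{eq:2.02}) are built to match) plus three nonnegative remainders: $2\tau\left(\tau\frac{d^{2}u^{\tau}}{d\tau^{2}}+\frac{du^{\tau}}{d\tau}\right)^{2}$, $2(\rho-\gamma-1)\tau\left(\frac{du^{\tau}}{d\tau}\right)^{2}$, and $2\tau\int_{\partial B_1}\left|\nabla_{\theta}\frac{du^{\tau}}{d\tau}\right|^{2}$. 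Only the middle one is retained to produce (\ref{eq:2.03}), via $\frac{du^{\tau}}{d\tau}=\frac{1}{\tau}\left(\frac{4+a}{p-1}u^{\tau}+r\frac{\partial u^{\tau}}{\partial r}\right)$; the other two are simply discarded. Until you actually verify that every cross-term is an exact derivative matching a term of (\ref{eq:2.02}) and isolate this particular nonnegative remainder with the correct constant, the theorem is not proved.
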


\begin{proof} We follow the lines of analysis process in \cite{Davila} to prove the conclusion.
From the variational of the equation (\ref{eq:1.1}), we define the rescaled energy function
\begin{equation}\label{eq:2.04}
\hat{E}(\tau):=\tau^{\frac{4(p+1)+2a}{p-1}-n}\int_{B_{\tau}} \dfrac{1}{2}(\Delta u)^2-\dfrac{1}{p+1} |x|^a |u|^{p+1}.
\end{equation}Denote
\begin{equation*}
v:=\Delta u
\end{equation*}and
\begin{equation*}
u^{\tau}(x):=\tau^{\frac{4+a}{p-1}}u(\tau x),\quad v^{\tau}(x):=\tau^{\frac{4+a}{p-1}+2}v(\tau x).
\end{equation*}It is easy to check that
\begin{equation*}
v^{\tau}=\Delta u^{\tau}\;\ \mbox{and}\;\ \Delta v^{\tau}=|x|^a |u^{\tau}(x)|^{p-1}u(\tau x),
\end{equation*}and taking the derivative of the first equality in $\tau$
to get
\begin{equation*}
\dfrac{dv^{\tau}}{d \tau} = \Delta \dfrac{du^{\tau}}{d\tau }.
\end{equation*}We observe that differentiation in $\tau$ exchanges with differentiation and integration in $x$. Rescaling in (\ref{eq:2.04})
to yield
\begin{equation*}
\hat{E}(\tau)=\int_{B_1} \dfrac{1}{2} (v^{\tau})^2-\dfrac{1}{p+1} |x|^a |u^{\tau}|^{p+1}.
\end{equation*}Differentiating the function $\hat{E}(\tau)$ in $\tau$, we obtain
\begin{align}\label{eq:2.05}
\dfrac{d\hat{E}(\tau)}{d\tau} & =\int_{B_1} v^{\tau} \dfrac{dv^{\tau}}{d \tau}-|x|^a |u^{\tau}|^{p-1}u^{\tau}
\dfrac{du^{\tau}}{d \tau} \nonumber \\[0.06cm]
& = \int_{B_1} v^{\tau}\Delta \dfrac{d u^{\tau}}{d \tau}-\Delta v^{\tau} \dfrac{d u^{\tau}}{d\tau} \nonumber \\[0.06cm]
& =\int_{\partial B_1} v^{\tau} \dfrac{\partial }{\partial r}\dfrac{d u^{\tau}}{d \tau}
-\dfrac{\partial v^{\tau}}{\partial r}\dfrac{d u^{\tau}}{d \tau}.
\end{align}

In the following, all derivations of $u^{\tau}$ in the $r=|x|$ variable will be expressed by the
derivations in the $\tau$ variable.

From the definition of $u^{\tau}$ and $v^{\tau}$, differentiating in $\tau$ implies
\begin{equation}\label{eq:2.06}
\dfrac{d u^{\tau}}{d \tau}(x)
 =\dfrac{1}{\tau} \left [\dfrac{4+a}{p-1} u^{\tau}(x)+r \dfrac{\partial u^{\tau}}{\partial r}(x)\right ]
\end{equation}and
\begin{equation*}
\dfrac{dv^{\tau}}{d \tau}(x) =\dfrac{1}{\tau} \left [\dfrac{2(p+1)+a}{p-1}v^{\tau}(x) +r \dfrac{\partial v^{\tau}}{\partial r}(x) \right ].
\end{equation*}In (\ref{eq:2.06}), differentiating in $\tau$ once again yields
\begin{equation*}
\tau \dfrac{d^2 u^{\tau}}{d \tau^2}+\dfrac{d u^{\tau}}{d \tau}=\dfrac{4+a}{p-1}
\dfrac{du^{\tau}}{d \tau}+r \dfrac{\partial }{\partial
r}\dfrac{d u^{\tau}}{d \tau}.
\end{equation*}So we get
\begin{eqnarray*}
& r\dfrac{\partial }{\partial r} \dfrac{d u^{\tau}}{d \tau}  =\tau \dfrac{d^2 u^{\tau}}{d \tau^2}
+\dfrac{p-5-a}{p-1}\dfrac{du^{\tau}}{d \tau}, &\\[0.06cm]
& r \dfrac{\partial v^{\tau}}{\partial r} =\tau \dfrac{d v^{\tau}}{d \tau}-\dfrac{2(p+1)+a}{p-1} v^{\tau}.&
\end{eqnarray*}Inserting the above two equalities into (\ref{eq:2.05}), we find
\begin{align}\label{eq:2.07}
\dfrac{d \hat{E}}{d \tau}= & \int_{\partial B_1} v^{\tau} \left (\tau \dfrac{d^2 u^{\tau}}{d \tau^2}
+\dfrac{p-5-a}{p-1}\dfrac{d u^{\tau}}{d \tau} \right ) \nonumber \\[0.06cm]
& -\dfrac{d u^{\tau}}{d \tau} \left ( \tau \dfrac{d v^{\tau}}{d \tau}
-\dfrac{2(p+1)+a}{p-1} v^{\tau} \right )\nonumber \\[0.06cm]
= & \int_{\partial B_1} \tau v^{\tau}\dfrac{d^2 u^{\tau}}{d \tau^2}+3v^{\tau} \dfrac{d u^{\tau}}{d \tau}-\tau
\dfrac{du^{\tau}}{d \tau}\dfrac{d v^{\tau}}{d \tau}.
\end{align}

Now, we need to represent the function $v^{\tau}$ by the use of a combination of $u^{\tau}$
and the derivation of $u^{\tau}$ in $\tau$. Taking derivative of (\ref{eq:2.06}) in $r$,
we obtain on $\partial B_1$
\begin{align*}
\dfrac{\partial ^2 u^{\tau}}{\partial r^2} = & \tau \dfrac{\partial }{\partial r}\dfrac{d u^{\tau}}{d \tau}
-\dfrac{p+3+a}{p-1}\dfrac{\partial u^{\tau}}{\partial r} \\[0.06cm]
= & \tau^2 \dfrac{d^2 u^{\tau}}{d \tau^2}+\dfrac{p-5-a}{p-1} \tau \dfrac{d u^{\tau}}{d \tau} \\[0.06cm]
& -\dfrac{p+3+a}{p-1} \left ( \tau \dfrac{d u^{\tau}}{d \tau}-\dfrac{4+a}{p-1} u^{\tau} \right )\\[0.06cm]
= & \tau^2 \dfrac{d^2 u^{\tau}}{d \tau^2}-\dfrac{8+2a}{p-1} \tau \dfrac{du^{\tau}}{d \tau}
+\dfrac{(4+a)(p+3+a)}{(p-1)^2} u^{\tau}.
\end{align*}Using spherical coordinates to write $u^{\tau}(x)=
u^{\tau}(r,\theta)$ with $r=|x|$ and $\theta=\dfrac{x}{|x|} \in \mathbb{S}^{n-1}$, then on $\partial B_1$, we get
\begin{align}\label{eq:2.08}
v^{\tau} = & \dfrac{\partial^2 u^{\tau}}{\partial r^2}+\dfrac{n-1}{r}\dfrac{\partial u^{\tau}}{\partial r}
+\dfrac{1}{r^2} \Delta_{\theta} u^{\tau} \nonumber \\[0.06cm]
= & \tau^2\dfrac{d^2 u^{\tau}}{d \tau^2}+ \left [n-1-\dfrac{8+2a}{p-1} \right ]\tau \dfrac{du^{\tau}}{d \tau} \\[0.06cm]
 & +\dfrac{4+a}{p-1} \left [\dfrac{4+a}{p-1}-n+2 \right ]u^{\tau}+\Delta_{\theta} u^{\tau} \nonumber \\[0.06cm]
= & \tau^2\dfrac{d^2 u^{\tau}}{d \tau^2}+\rho \tau \dfrac{du^{\tau}}{d \tau}+\gamma u^{\tau} +\Delta_{\theta}u^{\tau},
\end{align}where $\Delta_{\theta}$ is the Laplace-Beltrami operator on $\partial B_1$ and $\nabla_{\theta}$ (see section 3) is the tangential
derivative on $\partial B_1$, and
\begin{equation*}
\rho : =n-1-\dfrac{8+2a}{p-1},\quad \gamma : =\dfrac{4+a}{p-1}\left ( \dfrac{4+a}{p-1}-n+2 \right ).
\end{equation*}

Substituting (\ref{eq:2.08}) into (\ref{eq:2.07}), we have
\begin{align}\label{eq:2.09}
\dfrac{d}{d\tau} \hat{E}(\tau)= & \int_{\partial B_1} \tau \left (\tau^2\dfrac{d^2 u^{\tau}}{d \tau^2}
+\rho \tau \dfrac{du^{\tau}}{d \tau}+\gamma u^{\tau} \right )\dfrac{d^2 u^{\tau}}{d\tau^2} \nonumber \\[0.06cm]
& +3 \left (\tau^2\dfrac{d^2 u^{\tau}}{d \tau^2}+\rho \tau \dfrac{du^{\tau}}{d \tau}+\gamma u^{\tau} \right )
\dfrac{du^{\tau}}{d \tau} \nonumber \\[0.06cm]
& -\tau \dfrac{d u^{\tau}}{d \tau} \dfrac{d }{d\tau} \left (\tau^2\dfrac{d^2 u^{\tau}}{d \tau^2}
+\rho \tau \dfrac{du^{\tau}}{d \tau}+\gamma u^{\tau} \right )\nonumber \\[0.06cm]
& +\int_{\partial B_1}\tau \Delta_{\theta} u^{\tau}\dfrac{d^2 u^{\tau}}{d \tau^2}
+3\Delta_{\theta} u^{\tau}\dfrac{d u^{\tau}}{d \tau}-\tau \dfrac{d u^{\tau}}{d \tau} \Delta_{\theta}\dfrac{d u^{\tau}}{d \tau} \nonumber \\[0.06cm]
=: & T_1+T_2.
\end{align}The calculation for $T_1$ is processed as follows
\begin{align}\label{eq:2.10}
T_1 = & \int_{\partial B_1} \tau \left (\tau^2\dfrac{d^2 u^{\tau}}{d \tau^2}
+\rho \tau \dfrac{du^{\tau}}{d \tau}+\gamma u^{\tau} \right )\dfrac{d^2 u^{\tau}}{d\tau^2} \nonumber \\[0.06cm]
& +3 \left (\tau^2\dfrac{d^2 u^{\tau}}{d \tau^2}+\rho \tau \dfrac{du^{\tau}}{d \tau}+\gamma u^{\tau} \right )
\dfrac{du^{\tau}}{d \tau} \nonumber \\[0.06cm]
& -\tau \dfrac{d u^{\tau}}{d \tau} \dfrac{d }{d\tau} \left (\tau^2\dfrac{d^2 u^{\tau}}{d \tau^2}
+\rho \tau \dfrac{du^{\tau}}{d \tau}+\gamma u^{\tau} \right )\nonumber \\[0.06cm]
= & \int_{\partial B_1} \tau^3 \left (\dfrac{d^2 u^{\tau}}{d\tau^2} \right )^2 +\tau^2 \dfrac{d^2 u^{\tau}}{d \tau^2}
\dfrac{d u^{\tau}}{d \tau} +\gamma \tau u^{\tau} \dfrac{d^2 u^{\tau}}{d \tau^2} +3\gamma u^{\tau} \dfrac{d u^{\tau}}{d \tau} \nonumber \\[0.06cm]
& +(2\rho -\gamma)\tau \left (\dfrac{d u^{\tau}}{d \tau} \right )^2
-\tau^3 \dfrac{d u^{\tau}}{d \tau}\dfrac{d^3 u^{\tau}}{d\tau^3} \nonumber \\[0.06cm]
= & \int_{\partial B_1} 2\tau^3 \left (\dfrac{d^2 u^{\tau}}{d\tau^2} \right )^2 +4\tau^2 \dfrac{d^2 u^{\tau}}{d \tau^2}
\dfrac{d u^{\tau}}{d \tau} +2(\rho -\gamma)\tau \left (\dfrac{d u^{\tau}}{d \tau} \right )^2\nonumber \\[0.06cm]
& +\dfrac{\gamma}{2} \dfrac{d^2}{d \tau^2} \left [\tau \left ( u^{\tau}\right )^2 \right ] -\dfrac{1}{2} \dfrac{d }{d\tau} \left [ \tau^3\dfrac{d }{d\tau}
\left (\dfrac{du^{\tau}}{d\tau} \right )^2 \right ] +\dfrac{\gamma}{2}\dfrac{d (u^{\tau})^2}{d \tau} \nonumber \\[0.06cm]
\ge & \int_{\partial B_1} \dfrac{\gamma}{2} \dfrac{d^2}{d \tau^2} \left [\tau \left ( u^{\tau}\right )^2 \right ]
-\dfrac{1}{2} \dfrac{d }{d\tau} \left [ \tau^3\dfrac{d }{d\tau}
\left (\dfrac{du^{\tau}}{d\tau} \right )^2 \right ] +\dfrac{\gamma}{2}\dfrac{d (u^{\tau})^2}{d \tau}.
\end{align}Here choosing $p>\dfrac{n+4+2a}{n-4}$, it implies that
\begin{equation*}
\rho -\gamma =\left (n-1-\dfrac{8+2a}{p-1} \right )-\dfrac{4+a}{p-1}\left (\dfrac{4+a}{p-1}-n+2 \right )>1
\end{equation*}and
\begin{align}\label{eq:2.11}
& 2\tau^3 \left (\dfrac{d^2 u^{\tau}}{d\tau^2} \right )^2 +4\tau^2 \dfrac{d^2 u^{\tau}}{d \tau^2}
\dfrac{d u^{\tau}}{d \tau} +2(\rho -\gamma)\tau \left (\dfrac{d u^{\tau}}{d \tau} \right )^2 \nonumber \\[0.06cm]
&\; =2\tau \left ( \tau \dfrac{d^2 u^{\tau}}{d\tau^2}+ \dfrac{d u^{\tau}}{d \tau}\right )^2 +2(\rho-\gamma-1)
\tau \left (\dfrac{d u^{\tau}}{d \tau} \right )^2 \nonumber \\[0.06cm]
& \; \ge 0.
\end{align}Integrating by parts on $ \partial B_1$, we get
\begin{align}\label{eq:2.12}
T_2 = & \int_{\partial B_1}-\tau \nabla_{\theta} u^{\tau} \nabla_{\theta} \dfrac{d^2 u^{\tau}}{d \tau^2}
-3 \nabla_{\theta} u^{\tau} \nabla_{\theta} \dfrac{d u^{\tau}}{d \tau}+\tau \left | \nabla_{\theta} \dfrac{d u^{\tau}}{d \tau} \right |^2 \nonumber \\[0.06cm]
=& -\dfrac{\tau}{2}\dfrac{d^2}{d\tau^2} \int_{\partial B_1} |\nabla_{\theta} u^{\tau}|^2-\dfrac{3}{2}\dfrac{d }{d\tau} \int_{\partial B_1}
\left |\nabla_{\theta} u^{\tau} \right |^2+2\tau \int_{\partial B_1}\left |\nabla_{\theta} \dfrac{d u^{\tau}}{d\tau} \right |^2 \nonumber \\[0.06cm]
= & -\dfrac{1}{2} \dfrac{d^2}{d\tau^2} \left (\tau \int_{\partial B_1} \left |\nabla_{\theta} u^{\tau} \right |^2 \right )
-\dfrac{1}{2} \dfrac{d }{d \tau} \int_{\partial B_1} \left |\nabla_{\theta} u^{\tau} \right |^2
+2\tau \int_{\partial B_1}\left |\nabla_{\theta} \dfrac{d u^{\tau}}{d\tau} \right |^2 \nonumber \\[0.06cm]
\ge & -\dfrac{1}{2} \dfrac{d^2}{d\tau^2} \left (\tau \int_{\partial B_1} \left |\nabla_{\theta} u^{\tau} \right |^2 \right )
-\dfrac{1}{2} \dfrac{d }{d \tau} \int_{\partial B_1} \left |\nabla_{\theta} u^{\tau} \right |^2.
\end{align}

On the other hand, we observe that all terms in (\ref{eq:2.10}) and (\ref{eq:2.12}) by the use of the rescaling can be expressed as follows:
\begin{align*}
\int_{\partial B_1}\dfrac{d }{d \tau}(u^{\tau})^2 & =\dfrac{d}{d \tau} \left (\tau^{\frac{8+2a}{p-1}+1-n}\int_{\partial B_{\tau}} u^2 \right ),\\[0.06cm]
\int_{\partial B_1} \dfrac{d^2 }{d \tau^2} \left [\tau (u^{\tau})^2 \right ] & =\dfrac{d^2}{d\tau^2} \left (
\tau^{\frac{8+2a}{p-1}+2-n}\int_{\partial B_{\tau}} u^2 \right ),\\[0.06cm]
\int_{\partial B_1} \dfrac{d}{d \tau}\left [\tau^3 \dfrac{d }{d \tau}\left ( \dfrac{d u^{\tau}}{d \tau}\right )^2 \right ] &=
\dfrac{d}{d \tau} \left [\tau^3 \dfrac{d}{d\tau} \left ( \tau^{\frac{8+2a}{p-1}+1-n}\int_{\partial B_{\tau}}
\left (\dfrac{4+a}{p-1} \tau^{-1} u +\dfrac{\partial u}{\partial r} \right )^2\right ) \right ],\\[0.06cm]
\dfrac{d^2}{d \tau^2} \left (\tau \int_{\partial B_1}\left | \nabla_{\theta}u^{\tau} \right |^2 \right )
& =\dfrac{d^2}{d\tau^2} \left [\tau^{ \frac{8+2a}{p-1}+4-n}\int_{\partial B_{\tau}}\left (|\nabla u|^2-\left |\dfrac{\partial u}{\partial r} \right |^2
\right ) \right ],\\[0.06cm]
\dfrac{d}{d \tau} \left ( \int_{\partial B_1}\left | \nabla_{\theta}u^{\tau} \right |^2 \right )
& =\dfrac{d}{d\tau} \left [\tau^{ \frac{8+2a}{p-1}+3-n}\int_{\partial B_{\tau}}\left (|\nabla u|^2-\left |\dfrac{\partial u}{\partial r} \right |^2
\right ) \right ].
\end{align*}Combining with (\ref{eq:2.09})-(\ref{eq:2.12}), we obtain
\begin{align*}
\dfrac{d \hat{E}(\tau)}{d \tau} \ge & \ \dfrac{\gamma}{2} \dfrac{d}{d \tau} \left (\tau^{\frac{8+2a}{p-1}+1-n}\int_{\partial B_{\tau}} u^2 \right ) +\dfrac{\gamma}{2} \dfrac{d^2}{d\tau^2} \left (
\tau^{\frac{8+2a}{p-1}+2-n}\int_{\partial B_{\tau}} u^2 \right ) \\[0.06cm]
& -\dfrac{1}{2} \dfrac{d}{d \tau} \left [\tau^3 \dfrac{d}{d\tau} \left ( \tau^{\frac{8+2a}{p-1}+1-n}\int_{\partial B_{\tau}}
\left (\dfrac{4+a}{p-1} \tau^{-1} u +\dfrac{\partial u}{\partial r} \right )^2\right ) \right ] \\[0.06cm]
& -\dfrac{1}{2} \dfrac{d}{d\tau} \left [\tau^{ \frac{8+2a}{p-1}+3-n}\int_{\partial B_{\tau}}\left (|\nabla u|^2-\left |\dfrac{\partial u}{\partial r} \right |^2
\right ) \right ] \\[0.06cm]
& -\dfrac{1}{2} \dfrac{d^2}{d\tau^2} \left [\tau^{ \frac{8+2a}{p-1}+4-n}\int_{\partial B_{\tau}}\left (|\nabla u|^2-\left |\dfrac{\partial u}{\partial r} \right |^2
\right ) \right ].
\end{align*}Therefore, combining the inequality with (\ref{eq:2.02}) and (\ref{eq:2.04}), we get the inequality (\ref{eq:2.03}).
From the properties of integration, we conclude that $E(r;x,u)$ is nonindecreasing in $r \in (0,R)$.
\end{proof}

\begin{remark}
From (\ref{eq:2.10})-(\ref{eq:2.12}), it implies that we can take
\begin{equation*}
c(n,p,a)=2(\rho -\gamma -1).
\end{equation*}In particular, if $p=\dfrac{n+4+2a}{n-4}$, then $c(n,p,a)=\dfrac{n^2-4n+8}{2}>0$. Therefore, Theorem \ref{eq:t2.1} also holds
for $p=\dfrac{n+4+2a}{n-4}$.
\end{remark}

\begin{corollary}\label{eq:c2.1}
If the hypotheses of Theorem \ref{eq:t2.1} hold and $E(\sigma;0,u) \equiv \mbox{const}$, for all $\sigma \in (0, R)$,
then $u$ is homogeneous in $B_R \backslash \{0\}$, i.e.,
\begin{equation}\label{eq:2.13}
u(\tau x)=\tau^{-\frac{4+a}{p-1}}u(x),\;\forall \tau \in (0,1],\;\ \ x \in B_R \backslash \{0\}.
\end{equation}
\end{corollary}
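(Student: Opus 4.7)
The plan is to exploit the sharp form of the monotonicity formula (\ref{eq:2.03}) provided by Theorem \ref{eq:t2.1}, which bounds $dE/dr$ below by a manifestly non-negative spherical integral with a strictly positive coefficient $c(n,p,a)>0$. Once one knows $E(\sigma;0,u)$ is constant on $(0,R)$, the left-hand side of (\ref{eq:2.03}) vanishes identically, and the plan is to read off a pointwise radial ODE from this, then integrate it along each ray.

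First, I would differentiate the constancy assumption to obtain $\frac{d}{dr}E(r;0,u)\equiv 0$ on $(0,R)$. Plugging this into (\ref{eq:2.03}) yields, for a.e.\ $r\in(0,R)$,
\begin{equation*}
0 \;\ge\; c(n,p,a)\, r^{-n+2+\frac{8+2a}{p-1}} \int_{\partial B_r}\left(\frac{4+a}{p-1}\,r^{-1} u + \frac{\partial u}{\partial r}\right)^2 dS.
\end{equation*}
Since $c(n,p,a)>0$ and the integrand is a square, this forces
\begin{equation*}
\frac{4+a}{p-1}\,r^{-1} u(r,\theta) + \frac{\partial u}{\partial r}(r,\theta) = 0 \qquad \text{for all } r\in(0,R),\ \theta\in \mathbb S^{n-1},
\end{equation*}
after noting that $u$ is smooth (or at least $W^{4,2}_{\mathrm{loc}}$ with the appropriate traces) so that the pointwise vanishing on spheres is legitimate.

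Second, I would interpret the resulting equation as a first-order linear ODE along each fixed ray $\{\theta=\mathrm{const}\}$: with $\alpha:=\frac{4+a}{p-1}$, one has $\partial_r u = -\alpha u/r$, whose general solution is $u(r,\theta)=r^{-\alpha}\phi(\theta)$ for some $\phi$ depending only on the angular variable $\theta\in\mathbb S^{n-1}$. Setting $\phi(\theta)=u(1,\theta)$ (or, more precisely, $\phi(\theta)=|x|^{\alpha}u(x)$ evaluated at any radius) then gives, for any $\tau\in(0,1]$ and $x\in B_R\setminus\{0\}$,
\begin{equation*}
u(\tau x) = (\tau|x|)^{-\alpha}\phi(\theta) = \tau^{-\alpha}|x|^{-\alpha}\phi(\theta) = \tau^{-\frac{4+a}{p-1}} u(x),
\end{equation*}
which is exactly (\ref{eq:2.13}).

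The only subtle point I anticipate is the regularity bookkeeping: the monotonicity formula was derived for $u\in W^{4,2}_{\mathrm{loc}}$, and strictly speaking (\ref{eq:2.03}) holds in the distributional sense in $r$, so one should justify that constancy of $E$ allows us to conclude vanishing of the right-hand side at almost every $r$, and then that the ODE $\partial_r u = -\alpha u/r$ holds pointwise on spheres. For smooth $u$ this is immediate; in the weak setting one trades on the trace theory on $\partial B_r$ already implicit in the definition (\ref{eq:2.02}) of $E$. Once that step is in hand, the integration along rays is routine and delivers the claimed homogeneity.
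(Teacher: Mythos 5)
Your argument is correct and is essentially the paper's own proof: both exploit the fact that constancy of $E$ together with the lower bound (\ref{eq:2.03}) forces the squared radial quantity $\frac{4+a}{p-1}r^{-1}u+\frac{\partial u}{\partial r}$ to vanish, and then integrate this first-order relation along rays to obtain (\ref{eq:2.13}). The only cosmetic difference is that the paper integrates (\ref{eq:2.03}) over an annulus $B_{r_2}\setminus B_{r_1}$ to get vanishing a.e.\ (which sidesteps the pointwise-differentiability issue you flag), whereas you differentiate $E$ directly; both lead to the same conclusion.
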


\begin{proof}
Taking arbitrarily $r_1, r_2 \in (0,R)$ with $r_1< r_2$, we obtain from Theorem \ref{eq:t2.1} that
\begin{align*}
0 & = E(r_2; 0, u)-E(r_1;0,u) \\
& =\int_{r_1}^{r_2} \dfrac{d }{d \sigma}E(\sigma; 0,u) d \sigma \\
& \ge c(n,p,a) \int_{B_{r_2} \backslash B_{r_1}}
\dfrac{\left (\frac{4+a}{p-1}\sigma^{-1} u+\frac{\partial u}{\partial \sigma}
\right )^2}{|x|^{n-2-\frac{8+2a}{p-1}}}dx.
\end{align*}This implies
\begin{equation*}
\dfrac{4+a}{p-1} \sigma^{-1} u+\dfrac{\partial u}{\partial \sigma}=0,\;\mbox{a.e.}\;\mbox{in}\;\ B_{R}\backslash \{0\}.
\end{equation*}Hence for any fixed $x \in B_R \backslash \{0\}$, we have
\begin{equation*}
\dfrac{d}{d \tau}\left (\tau^{\frac{4+a}{p-1}} u(\tau x) \right ) \equiv 0, \forall \tau \in (0,1].
\end{equation*}Obviously, the equality (\ref{eq:2.13}) holds.
\end{proof}\vskip .1in

The following basic integral estimates for solutions (whether positive or sign-changing)
of (\ref{eq:1.1}) follows from the rescaled test function method.

\begin{lemma}(\cite[Lemma 2.3]{Wei-Ye})\label{eq:l2.1}
For any $\zeta \in C^4(\mathbb{R}^n)$ and $\eta \in C^{\infty}_0 (\mathbb{R}^n)$, we have the following identities
\begin{align}
\int_{\mathbb{R}^n} (\Delta^2 \zeta) \zeta \eta^2 dx= & \int_{\mathbb{R}^n} [\Delta (\zeta \eta)]^2dx +\int_{\mathbb{R}^n}
\left [-4(\nabla \zeta \cdot \nabla \eta)^2+2 \zeta \Delta \zeta | \nabla \eta|^2 \right ]dx \nonumber \\[0.06cm]
& \label{eq:2.14} +\int_{\mathbb{R}^n}\zeta^2 \left [2\nabla (\Delta \eta)\cdot \nabla \eta+(\Delta \eta)^2 \right ]dx
\end{align}and
\begin{equation}
\label{eq:2.15} 2 \int_{\mathbb{R}^n} |\nabla \zeta|^2 |\nabla \eta |^2 dx = \int_{\mathbb{R}^n}\left [ 2 \zeta
(-\Delta \zeta) |\nabla \eta|^2+\zeta^2 \Delta (|\nabla \eta|^2)\right ] dx.
\end{equation}
\end{lemma}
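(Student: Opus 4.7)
The plan is to prove both identities by elementary integration by parts, using the compact support of $\eta$ to drop all boundary terms. The core idea for (\ref{eq:2.14}) is to start from $\int (\Delta^2\zeta)\zeta\eta^2$, move two Laplacians onto $\zeta\eta^2$, and then compare the resulting expansion with the square $[\Delta(\zeta\eta)]^2$; what remains is exactly the lower-order material, which can then be cleaned up by one more integration by parts. Identity (\ref{eq:2.15}) is a one-shot integration by parts.

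\textbf{Identity (\ref{eq:2.15}) first, because it is the shorter warm-up.} I would write $|\nabla\zeta|^2 = \nabla\zeta\cdot\nabla\zeta$ and integrate by parts on one factor of $\nabla\zeta$ against $|\nabla\eta|^2$:
\begin{equation*}
\int_{\mathbb{R}^n}|\nabla\zeta|^2|\nabla\eta|^2 dx = -\int_{\mathbb{R}^n}\zeta\,\nabla\cdot\bigl(|\nabla\eta|^2\nabla\zeta\bigr) dx = -\int_{\mathbb{R}^n}\zeta\Delta\zeta\,|\nabla\eta|^2 dx - \int_{\mathbb{R}^n}\zeta\,\nabla\zeta\cdot\nabla(|\nabla\eta|^2) dx.
\end{equation*}
The last term equals $-\tfrac{1}{2}\int\nabla(\zeta^2)\cdot\nabla(|\nabla\eta|^2)$, and one more integration by parts turns it into $\tfrac{1}{2}\int\zeta^2\Delta(|\nabla\eta|^2)$. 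Multiplying by $2$ gives (\ref{eq:2.15}).

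\textbf{Identity (\ref{eq:2.14}).} I would integrate by parts twice to land on $\Delta\zeta$:
\begin{equation*}
\int_{\mathbb{R}^n}(\Delta^2\zeta)\zeta\eta^2 dx = \int_{\mathbb{R}^n}(\Delta\zeta)\,\Delta(\zeta\eta^2) dx,
\end{equation*}
and expand $\Delta(\zeta\eta^2) = \eta^2\Delta\zeta + 2\zeta\eta\Delta\eta + 2\zeta|\nabla\eta|^2 + 4\eta\,\nabla\zeta\cdot\nabla\eta$ via the product rule. Separately, expand
\begin{equation*}
[\Delta(\zeta\eta)]^2 = \bigl(\eta\Delta\zeta + \zeta\Delta\eta + 2\nabla\zeta\cdot\nabla\eta\bigr)^2
\end{equation*}
term by term. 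The two expansions share the blocks $\eta^2(\Delta\zeta)^2$ and $2\zeta\eta(\Delta\zeta)(\Delta\eta)$; subtracting, one is left with
\begin{equation*}
\int_{\mathbb{R}^n}(\Delta\zeta)\Delta(\zeta\eta^2) - \int_{\mathbb{R}^n}[\Delta(\zeta\eta)]^2 = \int_{\mathbb{R}^n}\!\Bigl[2\zeta\Delta\zeta\,|\nabla\eta|^2 - 4(\nabla\zeta\cdot\nabla\eta)^2 - \zeta^2(\Delta\eta)^2 - 4\zeta(\Delta\eta)(\nabla\zeta\cdot\nabla\eta)\Bigr]dx.
\end{equation*}
The key cleanup step is to rewrite the mixed term $-4\zeta(\Delta\eta)(\nabla\zeta\cdot\nabla\eta) = -2(\Delta\eta)\nabla(\zeta^2)\cdot\nabla\eta$ and integrate by parts once more, yielding $2\int\zeta^2\bigl[\nabla(\Delta\eta)\cdot\nabla\eta + (\Delta\eta)^2\bigr]$. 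Combining with the $-\zeta^2(\Delta\eta)^2$ piece produces $+\zeta^2(\Delta\eta)^2 + 2\zeta^2\,\nabla(\Delta\eta)\cdot\nabla\eta$, which matches the right-hand side of (\ref{eq:2.14}) exactly.

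\textbf{Main obstacle.} There is no real obstacle — both identities are purely algebraic consequences of the product rule and the divergence theorem applied to compactly supported $\eta$. The only risk is bookkeeping: a stray sign in expanding $[\Delta(\zeta\eta)]^2$ or mis-associating the factor of $2$ when passing from $\zeta\nabla\zeta$ to $\nabla(\zeta^2)/2$ would spoil the cancellation. I would therefore double-check the two expansions term by term before merging them.
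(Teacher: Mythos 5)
Your proof is correct: both expansions check out (the difference $\int\Delta\zeta\,\Delta(\zeta\eta^2)-\int[\Delta(\zeta\eta)]^2$ is exactly as you state, and the final integration by parts on $-2\int\Delta\eta\,\nabla(\zeta^2)\cdot\nabla\eta$ produces the remaining $\zeta^2$ terms), and the compact support of $\eta$ justifies discarding all boundary terms even though $\zeta$ itself is not compactly supported. The paper gives no proof of this lemma --- it is quoted verbatim from \cite[Lemma 2.3]{Wei-Ye} --- so there is nothing to compare against; your argument is the standard one and would serve as a self-contained proof. The only cosmetic slip is that the two expansions actually share three blocks (you omit mentioning the common term $4\eta\,\Delta\zeta\,(\nabla\zeta\cdot\nabla\eta)$), but your displayed difference already accounts for its cancellation, so nothing is affected.
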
\vskip .1in

\begin{lemma}\label{eq:l2.2}
Let $u\in C^4(\mathbb{R}^n)$ be a stable solution of (\ref{eq:1.1}). Then for large enough
$m$, we get that for all $\psi \in C^4_0(\mathbb{R}^n)$ with $0 \le \psi \le 1$
\begin{equation*}
\int_{\mathbb{R}^n} \left ( |\Delta u|^2+|x|^a |u|^{p+1} \right )\psi^{2m} \le C \int_{\mathbb{R}^n} |x|^{-\frac{2a}{p-1}}|G(\psi^m)|^{\frac{p+1}{p-1}},
\end{equation*}where $G(\psi^m)=|\nabla \psi|^4+\psi^{2(2-m)}\Big [|\nabla (\Delta \psi^m)\cdot \nabla \psi^m|+|\Delta \psi^m|^2
+|\Delta |\nabla \psi^m|^2| \Big ]$.

Furthermore, we find
\begin{equation}\label{eq:2.16}
\int_{B_R(x)} (|\Delta u|^2+|x|^a |u|^{p+1}) \le CR^{-4} \int_{B_{2R}(x)\backslash B_{R}(x)}u^2+ CR^{-2}
\int_{B_{2R}(x)\backslash B_{R}(x)} |u \Delta u|
\end{equation}and
\begin{equation}\label{eq:2.17}
\int_{B_R(x)}(|\Delta u|^2+|x|^a |u|^{p+1}) \le CR^{n-\frac{4(p+1)+2a}{p-1}}
\end{equation}for all $B_R(x)$.
Here the constant $C$ does not depend on $R$ and $u$.
\end{lemma}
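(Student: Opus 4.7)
The plan is to combine the stability inequality with the equation and the two integration-by-parts identities of Lemma~\ref{eq:l2.1}, then absorb the residual terms via a weighted Young inequality; once the master bound is in hand, (\ref{eq:2.16}) and (\ref{eq:2.17}) will follow by specializing $\psi$ to a standard annular cutoff. I would first test the stability inequality against $\phi=u\psi^m$ to obtain $p\int|x|^a|u|^{p+1}\psi^{2m}\le\int[\Delta(u\psi^m)]^2$, and in parallel multiply equation (\ref{eq:1.1}) by $u\psi^{2m}$ and invoke identity (\ref{eq:2.14}) with $\zeta=u$, $\eta=\psi^m$ to rewrite $\int(\Delta^2u)u\psi^{2m}$ as $\int[\Delta(u\psi^m)]^2$ plus correction integrals of the form $(\nabla u\cdot\nabla\psi^m)^2$, $u\Delta u|\nabla\psi^m|^2$, and $u^2$-terms with derivatives of $\psi^m$. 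Subtracting the two lines cancels $\int[\Delta(u\psi^m)]^2$ and produces
\[(p-1)\int|x|^a|u|^{p+1}\psi^{2m}\le\int\bigl[4(\nabla u\cdot\nabla\psi^m)^2-2u\Delta u|\nabla\psi^m|^2\bigr]-\int u^2\bigl[2\nabla(\Delta\psi^m)\cdot\nabla\psi^m+(\Delta\psi^m)^2\bigr].\]
A Cauchy--Schwarz on the first term followed by identity (\ref{eq:2.15}) with the same $\zeta$, $\eta$ replaces every $\int|\nabla u|^2|\nabla\psi^m|^2$ by combinations of $\int u\Delta u|\nabla\psi^m|^2$ and $\int u^2\Delta(|\nabla\psi^m|^2)$, so the right-hand side reduces to a finite sum of expressions of the schematic form $\int u^2A(\psi^m)$ and $\int u\Delta u\,B(\psi^m)$, with $A$, $B$ built from derivatives of $\psi^m$ of order at most four.

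Next, I would apply a weighted Young inequality of the form
\[u^2|A|\le\epsilon|x|^a|u|^{p+1}\psi^{2m}+C_\epsilon|x|^{-2a/(p-1)}|A|^{(p+1)/(p-1)}\psi^{-4m/(p-1)},\]
splitting each mixed $u\Delta u$ contribution so that a small $\epsilon\int(\Delta u)^2\psi^{2m}$ also appears on the right and can be absorbed, while the separate bound $\int(\Delta u)^2\psi^{2m}\le\int|x|^a|u|^{p+1}\psi^{2m}+(\text{mixed terms})$, obtained from testing the equation against $u\psi^{2m}$ and integrating by parts twice, controls $\int(\Delta u)^2\psi^{2m}$ itself. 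The main technical obstacle is the bookkeeping in this absorption: the exponent $m$ must be chosen large enough that every residual factor $\psi^{-4m/(p-1)}|A|^{(p+1)/(p-1)}$ repackages as one of the building blocks $|\nabla\psi|^4$ or $\psi^{2(2-m)}(\cdots)$ appearing in $G(\psi^m)^{(p+1)/(p-1)}$, and this is precisely the quantitative content of the hypothesis ``for large enough $m$''.

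Finally, to deduce (\ref{eq:2.16}) and (\ref{eq:2.17}) I would take $\psi\in C_0^\infty(B_{2R}(x))$ with $\psi\equiv 1$ on $B_R(x)$ and $|\nabla^k\psi|\le CR^{-k}$. Inequality (\ref{eq:2.16}) follows from the intermediate estimate \emph{before} the Young absorption, combined with the separate $(\Delta u)^2$ bound, because the correction integrals then inherit the natural weights $|\nabla\psi^m|^2\lesssim R^{-2}$ and $|\Delta\psi^m|\lesssim R^{-2}$ on the annulus $B_{2R}(x)\setminus B_R(x)$. For (\ref{eq:2.17}), a direct computation shows $G(\psi^m)\le CR^{-4}$ on that annulus, so the master estimate yields
\[\int_{B_R(x)}\bigl(|\Delta u|^2+|x|^a|u|^{p+1}\bigr)\le C\int_{B_{2R}(x)\setminus B_R(x)}|x|^{-2a/(p-1)}R^{-4(p+1)/(p-1)}\,dx\le CR^{n-(4(p+1)+2a)/(p-1)},\]
where the last step uses $|x|\sim R$ on the annulus (and $2a/(p-1)<n$ to handle a possible contribution from the origin when $0\in B_{2R}(x)$).
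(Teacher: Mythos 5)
Your proposal is correct and follows essentially the same route as the paper: stability tested on $u\psi^m$, the equation tested on $u\psi^{2m}$, identities (\ref{eq:2.14})--(\ref{eq:2.15}) to reduce everything to $\int u^2 A(\psi^m)$ and $\int u\Delta u\,B(\psi^m)$ terms, absorption of the $(\Delta u)^2$ piece, and then an annular cutoff. The only cosmetic difference is that you close the argument with a pointwise weighted Young inequality where the paper applies H\"older's inequality to $\int u^2\psi^{2(m-2)}G(\psi^m)$ and then absorbs the resulting power of $\int|x|^a|u|^{p+1}\psi^{2m}$; both yield the same condition $(m-2)(p+1)\ge 2m$ on $m$.
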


\begin{proof}
Since $u$ is a stable solution of (\ref{eq:1.1}), we choose arbitrarily $\zeta \in C^4_0(\mathbb{R}^n)$ and find
\begin{equation}\label{eq:2.18}
\int_{\mathbb{R}^n} |x|^a |u|^{p-1}u \zeta =\int_{\mathbb{R}^n} \Delta u \Delta \zeta
\end{equation}and
\begin{equation}\label{eq:2.19}
p\int_{\mathbb{R}^n} |x|^a |u|^{p-1} \zeta^2 \le \int_{\mathbb{R}^n} |\Delta \zeta|^2.
\end{equation}Testing (\ref{eq:2.18}) on $\zeta=u \psi^2$ for $\psi \in C^4_0(\mathbb{R}^n)$, we obtain
\begin{equation*}
\int_{\mathbb{R}^n}|x|^a |u|^{p+1} \psi^2 =\int_{\mathbb{R}^n} \Delta u \Delta (u\psi^2).
\end{equation*}Testing (\ref{eq:2.19}) on $\zeta =u \psi$ to yield
\begin{equation*}
p \int_{\mathbb{R}^n} |x|^a |u|^{p+1} \psi^2 \le \int_{\mathbb{R}^n} [\Delta (u\psi)]^2.
\end{equation*}Combining the above two results with (\ref{eq:2.14}), we get
\begin{align*}
(p-1)\int_{\mathbb{R}^n} |x|^a |u|^{p+1} \psi^2 \le & \int_{\mathbb{R}^n} \left [ 4(\nabla u\cdot \nabla \psi)^2-2u\Delta u
|\nabla \psi|^2 \right ]dx \\[0.06cm]
& +\int_{\mathbb{R}^n} u^2 \left [ 2 |\nabla (\Delta \psi)\cdot \nabla \psi|+|\Delta \psi|^2 \right ]dx.
\end{align*}Again a direct application of the identity (\ref{eq:2.15}) leads to
\begin{align}\label{eq:2.20}
& \int_{\mathbb{R}^n} |x|^a |u|^{p+1} \psi^2 dx \le C \int_{\mathbb{R}^n} |uv| |\nabla \psi|^2 dx \nonumber \\[0.06cm]
& +C\int_{\mathbb{R}^n}
u^2 \left [|\nabla (\Delta \psi) \cdot \nabla \psi|+ |\Delta \psi|^2 +|\Delta (|\nabla \psi|^2)| \right ]dx.
\end{align}Since $\Delta (u\psi)=v\psi+2\nabla u\cdot \nabla \psi+ u \Delta \psi$, we obtain
\begin{align}\label{eq:2.21}
& \int_{\mathbb{R}^n} v^2\psi^2 dx \le C \int_{\mathbb{R}^n} |uv| |\nabla \psi |^2 dx \nonumber \\[0.06cm]
& +C \int_{\mathbb{R}^n} u^2 \left [ |\nabla (\Delta
\psi)\cdot \nabla \psi| +|\Delta \psi |^2+|\Delta |\nabla \psi|^2| \right ]dx.
\end{align}We take a cut-off function $\psi \in C^{\infty}_0 (B_{2R}(x))$ such that $\psi \equiv 1$ in $B_R(x)$ and
for $k \le 3$, $|\nabla^k \psi|\le \dfrac{C}{R^k}$. Combining (\ref{eq:2.20}) with (\ref{eq:2.21}), we get
\begin{equation*}
\int_{B_R(x)}(v^2 + |x|^a|u|^{p+1})dx \le CR^{-4} \int_{B_{2R}(x)\backslash B_R(x)} u^2 +CR^{-2} \int_{B_{2R}(x) \backslash B_R(x)}
|uv|.
\end{equation*}Noting that the constant $C$ does not depend on $R$ and $u$.

Next, the functions $\psi$ in (\ref{eq:2.20}) and (\ref{eq:2.21}) are replaced by $\psi^m$,
where $m$ is a large integer. Then
\begin{align*}
\int_{\mathbb{R}^n} & \left [ |x|^a  |u|^{p+1} +v^2 \right ] \psi^{2m} dx \le C \int_{\mathbb{R}^n} |uv| \psi^{2(m-1)} |\nabla \psi|^2\\
& +C\int_{\mathbb{R}^n} u^2 \left [|\nabla (\Delta \psi^m)\cdot \nabla \psi^m|+|\Delta \psi^m|^2+|\Delta |\nabla \psi^m|^2|\right ]dx.
\end{align*}A simple application of Young's inequality yields
\begin{equation*}
\int_{\mathbb{R}^n}|uv|\psi^{2(m-1)}|\nabla \psi|^2 \le \dfrac{1}{2C} \int_{\mathbb{R}^n} v^2 \psi^{2m} +C
\int_{\mathbb{R}^n} u^2 \psi^{2(m-2)}|\nabla \psi|^4.
\end{equation*}Therefore we get
\begin{equation}\label{eq:2.22}
\int_{\mathbb{R}^n}|x|^a |u|^{p+1} \psi^{2m}+v^2 \psi^{2m}dx \le C\int_{\mathbb{R}^n} u^2 \psi^{2(m-2)} G(\psi^m),
\end{equation}where $G(\psi^m)=|\nabla \psi|^4+\psi^{2(2-m)}\left [|\nabla (\Delta \psi^m)\cdot \nabla \psi^m|+|\Delta \psi^m|^2
+|\Delta |\nabla \psi^m|^2| \right ]$. Utilizing H\"{o}lder's inequality to find
\begin{align*}
\int_{\mathbb{R}^n} u^2 \psi^{2(m-2)} & G(\psi^m) =\int_{\mathbb{R}^n}|x|^{\frac{2a}{p+1}}u^2\psi^{2(m-2)} |x|^{-\frac{2a}{p+1}}G(\psi^m) \\
& \le \left (\int_{\mathbb{R}^n}|x|^a|u|^{p+1} \psi^{(m-2)(p+1)} \right )^{\frac{2}{p+1}}
\left (\int_{\mathbb{R}^n}|x|^{-\frac{2a}{p-1}}G(\psi^m)^{\frac{p+1}{p-1}} \right )^{\frac{p-1}{p+1}}.
\end{align*}Choosing $m$ large enough such that $(m-2)(p+1) \ge 2m$, and combining with (\ref{eq:2.22}), we get
\begin{equation*}
\int_{\mathbb{R}^n} (|\Delta u|^2 + |x|^a|u|^{p+1}) \psi^{2m}dx \le
C\int_{\mathbb{R}^n} |x|^{-\frac{2a}{p+1}} G(\psi^m)^{\frac{p+1}{p-1}},
\end{equation*}where the constant $C$ only depends on $n$, $p$, $a$, $m$ and $\psi$. In the above inequality, we take a cut-off function $\psi \in C^{\infty}_0 (B_{2R}(x))$ such that $\psi \equiv 1$ in $B_R(x)$ and $|\nabla ^i \psi| \le \dfrac{C}{R^i}$ for $i=1,2,3$ once again. Then
\begin{align*}
\int_{B_R(x)} (|\Delta u|^2 + |x|^a|u|^{p+1}) dx & \le C \int_{B_{2R}(x) \backslash B_R(x)} |x|^{-\frac{2a}{p-1}}R^{-4\frac{p+1}{p-1}}dx \\[0.06cm]
& \le CR^{n-\frac{4(p+1)+2a}{p-1}}.
\end{align*}The proof is completed.
\end{proof}

\section{Proof of Theorem \ref{eq:t1.1}.}

We first obtain a nonexistence result for homogeneous stable solution of (\ref{eq:1.1}).

\begin{theorem}\label{eq:t3.1}
For any $p \in \left (\dfrac{n+4+2a}{n-4},p_a(n) \right )$, assume
that $u \in W^{2,2}_{loc}\left (\mathbb{R}^n \backslash \{0\}\right )$ is a homogeneous, stable solution of (\ref{eq:1.1}), 
and $|x|^a|u|^{p+1} \in L^1_{loc}
(\mathbb{R}^n \backslash \{0\})$, where $p_a(n)$ is given by (\ref{eq:2.01}). Then $u \equiv 0$.
\end{theorem}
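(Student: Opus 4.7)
The plan is to reduce the statement to a sharp Hardy--Rellich-type inequality on $\mathbb{S}^{n-1}$. Set $\alpha := \frac{4+a}{p-1}$ and use the homogeneity to write $u(x) = r^{-\alpha} w(\theta)$, with $r = |x|$ and $\theta = x/|x|$. Applying the identity $\Delta(r^{-\beta} h(\theta)) = r^{-\beta-2}\bigl[\Delta_\theta h - \beta(n-2-\beta) h\bigr]$ twice (first with $\beta = \alpha$, then with $\beta = \alpha + 2$), and noting that $\alpha(p-1) = 4+a$ makes the weights on both sides match, equation \eqref{eq:1.1} collapses to the fourth-order equation on the sphere
\[
\Delta_\theta^2 w - J_1\,\Delta_\theta w + J_0\,w = |w|^{p-1} w \quad \text{on } \mathbb{S}^{n-1},
\]
where $J_1 = \alpha(n-2-\alpha) + (\alpha+2)(n-4-\alpha)$ and $J_0 = \alpha(n-2-\alpha)(\alpha+2)(n-4-\alpha)$. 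A direct check shows $p J_1 = g(p)$ and $p J_0 = f(p)$. Multiplying the spherical equation by $w$ and integrating by parts on $\mathbb{S}^{n-1}$ gives the energy identity
\[
\int_{\mathbb{S}^{n-1}} \bigl[(\Delta_\theta w)^2 + J_1\,|\nabla_\theta w|^2 + J_0\,w^2\bigr]\,d\theta \;=\; \int_{\mathbb{S}^{n-1}} |w|^{p+1}\,d\theta.
\]

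Next I would extract information from stability using the separated test function $\psi(x) = r^{-(n-4)/2}\eta(r)\phi(\theta)$, where $\eta \in C_c^\infty((0,\infty))$ and $\phi \in C^\infty(\mathbb{S}^{n-1})$, so that $\psi \in C_0^4(\mathbb{R}^n \setminus \{0\})$. Writing $\mu = (n-4)/2$ and $L\phi := \Delta_\theta \phi - \tfrac{n(n-4)}{4}\phi$, a direct computation yields
\[
\Delta \psi \;=\; r^{-\mu-2}\,\eta\,L\phi \;+\; 3 r^{-\mu-1}\,\eta'\,\phi \;+\; r^{-\mu}\,\eta''\,\phi.
\]
With the logarithmic cutoff $\eta(r) = \chi(T^{-1}\log r)$ for a fixed bump $\chi$, the leading radial integral $\int_0^\infty \eta^2\,dr/r$ grows linearly in $T$, while $\int_0^\infty r(\eta')^2\,dr$, $\int_0^\infty r^3(\eta'')^2\,dr$ and all cross terms are $O(T^{-1})$; the same factor $\int \eta^2\,dr/r$ also appears in $p\int|x|^a|u|^{p-1}\psi^2\,dx = p\int r^{-4}|w|^{p-1}\psi^2\,dx$. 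Dividing by this common factor, passing to the limit $T\to\infty$, and then approximating to take $\phi = w$ (legitimate since $u \in W^{2,2}_{loc}$ and $|x|^a|u|^{p+1}\in L^1_{loc}$ force $w \in W^{2,2}(\mathbb{S}^{n-1}) \cap L^{p+1}(\mathbb{S}^{n-1})$), I obtain the Hardy--Rellich-type bound
\[
\int_{\mathbb{S}^{n-1}} (\Delta_\theta w)^2 + \tfrac{n(n-4)}{2}|\nabla_\theta w|^2 + \tfrac{n^2(n-4)^2}{16}\,w^2\,d\theta \;\ge\; p\!\int_{\mathbb{S}^{n-1}} |w|^{p+1}\,d\theta.
\]

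Combining this with $p$ times the energy identity produces
\[
(p-1)\!\int (\Delta_\theta w)^2 + \Bigl(pJ_1 - \tfrac{n(n-4)}{2}\Bigr)\!\int |\nabla_\theta w|^2 + \Bigl(pJ_0 - \tfrac{n^2(n-4)^2}{16}\Bigr)\!\int w^2 \;\le\; 0.
\]
Every coefficient is strictly positive: $p-1 > 0$, while the hypothesis $p \in \bigl(\tfrac{n+4+2a}{n-4}, p_a(n)\bigr)$ together with the algebraic computations at the start of Section~2 gives $pJ_1 = g(p) > \tfrac{n(n-4)}{2}$ and $pJ_0 = f(p) > \tfrac{n^2(n-4)^2}{16}$. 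Consequently each integral vanishes, so $w \equiv 0$ and hence $u \equiv 0$. The main obstacle is the careful bookkeeping in the cutoff limit: one must verify that every cross term coming from $\eta'$ or $\eta''$ (paired against either $L\phi$ or $\phi$) is of lower order in $T$, so that after dividing by $\int \eta^2\,dr/r$ and letting $T\to\infty$ only the clean spherical functional survives. Once this (standard but lengthy) scaling argument is set up, the positivity of the three coefficients encapsulates exactly the definition of $p_a(n)$.
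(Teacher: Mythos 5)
Your proposal is correct and follows essentially the same route as the paper: reduce via homogeneity to the fourth-order equation on $\mathbb{S}^{n-1}$ with coefficients $\ell_1=J_1$, $\ell_2=J_0$, pair the resulting energy identity with the stability inequality tested on $r^{-(n-4)/2}w(\theta)\zeta_\epsilon(r)$ for a logarithmic cutoff, and conclude from $p\ell_1>\tfrac{n(n-4)}{2}$ and $p\ell_2>\tfrac{n^2(n-4)^2}{16}$ (i.e.\ $g(p)$, $f(p)$ exceeding their critical values on $\bigl(\tfrac{n+4+2a}{n-4},p_a(n)\bigr)$). The only cosmetic difference is that you test with a general $\phi$ and then approximate $\phi=w$, whereas the paper plugs in $w$ directly, justified by $w\in W^{2,2}(\mathbb{S}^{n-1})\cap L^{p+1}(\mathbb{S}^{n-1})$.
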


\begin{proof}
From the conditions of Theorem,
we can assume that there exists a $w\in W^{2,2}(\mathbb{S}^{n-1})$ such that in polar coordinates
\begin{equation*}
u(r,\theta)=r^{-\frac{4+a}{p-1}} w(\theta).
\end{equation*}Since $u\in W^{2,2}(B_2 \backslash B_1)$ and $|x|^a|u|^{p+1} \in L^1(B_2 \backslash B_1)$, it implies
that $w \in W^{2,2}(\mathbb{S}^{n-1})\cap
L^{p+1}(\mathbb{S}^{n-1})$. A straightforward calculation of (\ref{eq:1.1}) to get
\begin{equation}\label{eq:3.1}
\Delta^2 _{\theta} w-\ell_1 \Delta_{\theta}w+\ell_2 w=|w|^{p-1}w,
\end{equation}where
\begin{align*}
\ell_1  =\left (\dfrac{4+a}{p-1}+2 \right )\left (n-4-\dfrac{4+a}{p-1} \right )+\dfrac{4+a}{p-1} \left (n-2-\dfrac{4+a}{p-1} \right ),\\[0.09cm]
\ell_2  = \dfrac{4+a}{p-1}\left (\dfrac{4+a}{p-1}+2 \right ) \left (n-4-\dfrac{4+a}{p-1}
\right ) \left (n-2-\dfrac{4+a}{p-1} \right ).
\end{align*}From $w \in W^{2,2}(\mathbb{S}^{n-1})$, multiplying (\ref{eq:3.1}) by $w$ and integrating by parts imply
\begin{equation}\label{eq:3.2}
\int_{\mathbb{S}^{n-1}}|\Delta_{\theta} w|^2 +\ell_1|\nabla_{\theta} w|^2 +\ell_2 w^2 =\int_{\mathbb{S}^{n-1}} |w|^{p+1}.
\end{equation}\vskip .05in

On the other hand, for any $\epsilon >0$, we choose an $\zeta_{\epsilon} \in C^{\infty}_0 \left (\left (\frac{\epsilon}{2},\frac{2}{\epsilon} \right ) \right )$
such that $\zeta_{\epsilon} \equiv 1$ in $\left (\epsilon, \frac{1}{\epsilon} \right )$ and
\begin{equation*}
r |\zeta_{\epsilon}'(r)|+r^2 |\zeta_{\epsilon}''(r)| \le C
\end{equation*}for all $r>0$. Since $u$ is a stable solution, we can choose a test function $r^{-\frac{n-4}{2}}w(\theta) \zeta_{\epsilon}(r)$ and get
\begin{equation*}
p \int_{\mathbb{R}^n} |x|^a |u|^{p-1} \left (r^{-\frac{n-4}{2}}w(\theta) \zeta_{\epsilon}(r)
\right )^2 dx \le \int_{\mathbb{R}^n} \left | \Delta \left ( r^{-\frac{n-4}{2}}w(\theta) \zeta_{\epsilon}(r)
\right ) \right |^2 dx.
\end{equation*}A simple calculation implies
\begin{align*}
\Delta \left (r^{-\frac{n-4}{2}}w(\theta)\zeta_{\epsilon}(r) \right ) =& -\dfrac{n(n-4)}{4}r^{-\frac{n}{2}}\zeta_{\epsilon}(r) w(\theta)+r^{-\frac{n}{2}} \zeta_{\epsilon}(r)
\Delta_{\theta} w(\theta)\\
& +3r^{-\frac{n}{2}+1}\zeta_{\epsilon}'(r) w(\theta)+r^{-\frac{n}{2}+2}\zeta_{\epsilon}''(r)w(\theta).
\end{align*}and
\begin{align}\label{eq:3.3}
\begin{split}
p & \int_0^{\infty} \int_{\mathbb{S}^{n-1}} r^a |u|^{p-1} \left (r^{-\frac{n-4}{2}}w(\theta) \zeta_{\epsilon}(r) \right )^2 r^{n-1}dr d\theta \\[0.03cm]
= & p \left ( \int_{\mathbb{S}^{n-1}} |w|^{p+1}d \theta \right ) \left ( \int_0^{\infty} r^{-1}\zeta_{\epsilon}^2(r) dr \right ) \\[0.03cm]
\le & \left (\int_{\mathbb{S}^{n-1}} \Big (|\Delta_{\theta} w|^2+\dfrac{n(n-4)}{2} |\nabla_{\theta} w|^2+
\dfrac{n^2(n-4)^2}{16} w^2 \Big )d \theta \right )\left (\int_0^{\infty} r^{-1}\zeta_{\epsilon}^2(r) dr \right ) \\[0.03cm]
& +O \Big \{ \left ( \int_0^{\infty} \left [ r |\zeta_{\epsilon}'(r)|^2+r^3\zeta_{\epsilon}''(r)^2
+|\zeta_{\epsilon}'(r)|\zeta_{\epsilon}(r)+r\zeta_{\epsilon}(r)|\zeta_{\epsilon}''(r)| \right ]dr \right ) \\[0.03cm]
& \times \int_{\mathbb{S}^{n-1}} \left [ w(\theta)^2+|\nabla_{\theta} w(\theta)|^2 \right ] d\theta  \Big \}.
\end{split}
\end{align}From the definition of $\zeta_{\epsilon}$, one can easily estimate that
\begin{equation*}
\int_0^{\infty} r^{-1}\zeta_{\epsilon}^2(r)dr \ge \int_{\epsilon}^{\frac{1}{\epsilon}}r^{-1}dr \ge |\ln \epsilon|
\end{equation*}and
\begin{equation*}
\int_0^{\infty} \Big [ r \zeta_{\epsilon}'(r)^2+r^3 \zeta_{\epsilon}''(r)^2
+|\zeta_{\epsilon}'(r)|\zeta_{\epsilon}(r)+r\zeta_{\epsilon}(r)|\zeta_{\epsilon}''(r)|\Big ] dr\le C.
\end{equation*}Letting $\epsilon \to 0$, it implies from (\ref{eq:3.3}) that
\begin{equation}\label{eq:3.4}
p\int_{\mathbb{S}^{n-1}} |w|^{p+1} d \theta \le \int_{\mathbb{S}^{n-1}} \left (|\Delta_{\theta} w|^2+\dfrac{n(n-4)}{2} |\nabla_{\theta} w|^2+
\dfrac{n^2(n-4)^2}{16} w^2 \right )d \theta.
\end{equation}Now, combining (\ref{eq:3.2}) with (\ref{eq:3.4}), we obtain
\begin{equation*}
\int_{\mathbb{S}^{n-1}} (p-1)|\Delta_{\theta} w|^2 +\left (p\ell_1-\dfrac{n(n-4)}{2} \right ) |\nabla_{\theta} w|^2+
\left (p\ell_2-\dfrac{n^2(n-4)^2}{16} \right ) w^2 \le 0.
\end{equation*}Since $\dfrac{n+4+2a}{n-4}< p <p_a(n)$, we get from the definition of $p_a(n)$ that
\begin{equation*}
p\ell_1-\dfrac{n(n-4)}{2}>0\;\ \mbox{and}\;\ p\ell_2-
\dfrac{n^2(n-4)^2}{16} >0.
\end{equation*}Therefore we have
\begin{equation*}
w \equiv 0.
\end{equation*} Thus $u\equiv 0$.
\end{proof}

\begin{remark}
One can easily check that
\begin{equation*}
u_s(r)=\ell_2^{\frac{1}{p-1}}r^{-\frac{4+a}{p-1}}
\end{equation*}is a singular solution of (\ref{eq:1.1}) in $\mathbb{R}^n \backslash \{0\}$, where
\begin{equation*}
\beta =\dfrac{4+a}{p-1},\quad \ell_2=\beta(\beta+2)(\beta+4-n)(\beta+2-n).
\end{equation*}

Using the well-known Hardy-Rellich inequality \cite{Rellich} with the best constant
\begin{equation*}
\int_{\mathbb{R}^n} |\Delta \psi|^2 dx \ge \dfrac{n^2(n-4)^2}{16}\int_{\mathbb{R}^n}\dfrac{\psi^2}{|x|^4}dx,\quad \forall \psi \in H^2(\mathbb{R}^n),
\end{equation*}we conclude that the singular solution $u_s$ is stable in $\mathbb{R}^n\backslash \{0\}$ if and only if
\begin{equation*}
p\ell_2 \le \dfrac{n^2(n-4)^2}{16}.
\end{equation*}
\end{remark}\vskip .1in

In what follows, we assume that $u$ is a smooth stable solution of (\ref{eq:1.1}) in $\mathbb{R}^n$ and $\dfrac{n+4+2a}{n-4}
<p< p_a(n)$. Then we obtain the following three lemmas which play an important role in dealing with the supercritical case.\vskip .1in

For all $\tau >0$, we define {\it blowing down} sequences
\begin{equation*}
u^{\tau}(x):=\tau^{\frac{4+a}{p-1}}u(\tau x),\quad v^{\tau}(x):=\tau^{\frac{4+a}{p-1}+2} v(\tau x).
\end{equation*}It is easy to check that $u^{\tau}$ is also a smooth
stable solution of (\ref{eq:1.1}) and for all ball $B_r(x) \subset \mathbb{R}^n$, the following estimate holds
\begin{align*}
& \int_{B_r(x)} \left [ (v^{\tau})^2+|x|^a |u^{\tau}|^{p+1} \right ] dx\\
& = \int_{B_r(x)} \left [ \tau^{\frac{8+2a}{p-1}+4}v(\tau x)^2+|x|^a \tau^{\frac{4+a}{p-1}(p+1)} |u(\tau x)|^{p+1} \right ] dx \\
& = \tau^{\frac{4(p+1)+2a}{p-1}-n} \int_{B_{\tau r}(x)} \left [ v(x)^2+|x|^a |u(x)|^{p+1} \right ] dx \\
& \le C \tau^{\frac{4(p+1)+2a}{p-1}-n}(\tau r)^{n-\frac{4(p+1)+2a}{p-1}}\qquad \mbox{by}\; \mbox{(\ref{eq:2.17})}\\
& = Cr^{n-\frac{4(p+1)+2a}{p-1}}.
\end{align*}Moreover, using H\"{o}lder's inequality to lead to
\begin{align*}
\int_{B_r(x)} (u^{\tau})^2 dx \le & \left (\int_{B_r(x)} |x|^a |(u^{\tau})|^{p+1} dx\right )^{\frac{2}{p+1}}
\left (\int_{B_r(x)}\left (|x|^{-\frac{2a}{p+1}} \right )^{\frac{p+1}{p-1}}dx \right )^{\frac{p-1}{p+1}}\\[0.06cm]
\le & Cr^{n-\frac{8+2a}{p-1}}.
\end{align*}

We note that $u^{\tau}$ are uniformly bounded in $L^{p+1}_{loc}(\mathbb{R}^n)$. From elliptic regularity theory,
it implies that $u^{\tau}$ are also uniformly bounded in $W^{2,2}_{loc}(\mathbb{R}^n)$. Hence,
we can suppose that $u^{\tau} \to u^{\infty}$ weakly
in $W^{2,2}_{loc}(\mathbb{R}^n) \cap L^{p+1}_{loc}(\mathbb{R}^n)$ (if necessary, we can extract a subsequence).
Utilizing standard embeddings, we get
$u^{\tau} \to u^{\infty}$ strongly in $W^{1,2}_{loc}(\mathbb{R}^n)$. Then for any ball $B_R(0)$, applying interpolation
between $L^q$ spaces and noting the above two inequalities, for any $q\in (1,p+1)$, we get
\begin{equation}\label{eq:4.1}
\|u^{\tau}-u^{\infty}\|_{L^q(B_R(0))}\le \|u^{\tau}-u^{\infty}\|_{L^1(B_R(0))}^{\mu} \|u^{\tau}-u^{\infty}\|_{L^{p+1}(B_R(0))}^{1-\mu} \to 0,
\end{equation}as $\tau \to +\infty$, where $\mu \in (0,1)$ satisfying $\dfrac{1}{q}=\mu +\dfrac{1-\mu}{p+1}$. That is, $u^{\tau} \to u^{\infty}$
in $L^q_{loc}(\mathbb{R}^n)$ for any $q \in (1,p+1)$.

Since $u^{\tau}$ is a smooth stable solution of (\ref{eq:1.1}), we get that for any $\zeta \in C_0^{\infty} (\mathbb{R}^n)$
\begin{eqnarray*}
& \displaystyle \int_{\mathbb{R}^n} \Delta u^{\infty} \Delta \zeta -|x|^a |u^{\infty}|^{p-1}u^{\infty} \zeta = \lim\limits_{\tau \to \infty}
\displaystyle \int_{\mathbb{R}^n} \Delta u^{\tau} \Delta \zeta -|x|^a |u^{\tau}|^{p-1}u^{\tau} \zeta,  &\\[0.08cm]
& \displaystyle \int_{\mathbb{R}^n} (\Delta \zeta)^2 -p |x|^a |u^{\infty}|^{p-1} \zeta^2  = \lim\limits_{\tau \to \infty}
\displaystyle \int_{\mathbb{R}^n} (\Delta \zeta)^2 -p |x|^a |u^{\tau}|^{p-1} \zeta^2  \ge 0.&
\end{eqnarray*} Thus $u^{\infty} \in W_{loc}^{2,2}
(\mathbb{R}^n) \cap L_{loc}^{p+1}(\mathbb{R}^n)$ is a stable solution of (\ref{eq:1.1}) in $\mathbb{R}^n$. \vskip .1in

\begin{lemma}\label{eq:l4.1}
$\lim\limits_{r \to +\infty} E(r;0,u) <+\infty$.
\end{lemma}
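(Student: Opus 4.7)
The overall strategy is to exploit the monotonicity established in Theorem \ref{eq:t2.1}: since $r\mapsto E(r;0,u)$ is non-decreasing, $\lim_{r\to+\infty}E(r;0,u)=\sup_{r>0}E(r;0,u)$, and it therefore suffices to exhibit a single sequence $R_k\to+\infty$ along which $E(R_k;0,u)$ is bounded uniformly in $k$. Monotonicity then upgrades this to a uniform bound of $E(r;0,u)$ for all $r>0$, forcing the limit to be finite.

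First I would collect the integral estimates needed. Estimate (\ref{eq:2.17}) gives $\int_{B_R}[(\Delta u)^2+|x|^a|u|^{p+1}]\le CR^{n-\frac{4(p+1)+2a}{p-1}}$. Combining this with H\"older's inequality exactly as in the calculation performed for the rescaled functions $u^\tau$ immediately above Lemma \ref{eq:l4.1} yields $\int_{B_R}u^2\le CR^{n-\frac{8+2a}{p-1}}$. A standard cutoff $\eta\in C^\infty_0(B_{2R})$, $\eta\equiv 1$ on $B_R$, combined with the integration-by-parts identity $\int|\nabla u|^2\eta^2=-\int u\,\Delta u\,\eta^2-2\int u\,\nabla u\cdot\nabla\eta\,\eta$, then produces $\int_{B_R}|\nabla u|^2\le CR^{n-2-\frac{8+2a}{p-1}}$; and using $\int(\Delta\varphi)^2=\int|D^2\varphi|^2$ for $\varphi\in C^\infty_c$ (corrected by the cutoff) yields $\int_{B_R}|D^2 u|^2\le CR^{n-4-\frac{8+2a}{p-1}}$.

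Next I would inspect the seven summands of (\ref{eq:2.02}) individually. The first summand is $\le C$ uniformly in $r$ directly from (\ref{eq:2.17}). Each of the remaining six has the shape $r^{\alpha}\int_{\partial B_r}\Phi$ or $\frac{d}{dr}(r^{\alpha}\int_{\partial B_r}\Phi)$ (possibly multiplied by $r^3$) with $\Phi\in\{u^2,(\frac{4+a}{(p-1)r}u+\partial_r u)^2,|\nabla u|^2-(\partial_r u)^2\}$. Expanding each $r$-derivative by the identity $\frac{d}{dr}\bigl(r^{\alpha}\int_{\partial B_r}\phi\bigr)=r^{\alpha-1}\int_{\partial B_r}\bigl[(\alpha+n-1)\phi+r\,\partial_r\phi\bigr]$ reduces every summand to a weighted boundary integral of $u^{2}$, $u\,\partial_r u$, $(\partial_r u)^{2}$, $|\nabla_\theta u|^{2}$, and (only in the $r^3$-term and in the tangential-gradient derivative term) expressions involving $\partial_r^{2}u$. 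By the coarea formula applied to the three volume estimates above, the subset of $r\in(R,2R)$ simultaneously satisfying $\int_{\partial B_r}u^2\le CR^{n-1-\frac{8+2a}{p-1}}$, $\int_{\partial B_r}|\nabla u|^2\le CR^{n-3-\frac{8+2a}{p-1}}$ and $\int_{\partial B_r}|D^2 u|^2\le CR^{n-5-\frac{8+2a}{p-1}}$ has positive measure, hence I can pick $R_k$ in it. A Cauchy--Schwarz estimate then collapses each boundary integral to the power of $R_k$ that exactly cancels the weight $r^{\alpha}$, producing $E(R_k;0,u)\le C$ independent of $k$.

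The main technical obstacle is the term $\frac{r^3}{2}\frac{d}{dr}[\cdots]$ of (\ref{eq:2.02}): once its derivative is expanded, the boundary integrand carries $\partial_r f$ with $f=\frac{4+a}{(p-1)r}u+\partial_r u$, which brings in $\partial_r^{2}u$ on $\partial B_r$; this is precisely why the estimate $\int_{B_R}|D^2 u|^2\le CR^{n-4-\frac{8+2a}{p-1}}$ must be included in the toolbox. A secondary bookkeeping point is to verify that, after the Cauchy--Schwarz step, each summand scales as $R_k^{0}$ rather than as a positive power of $R_k$; this is exactly what the definition of the critical scaling exponent $\frac{4+a}{p-1}$ underlying (\ref{eq:2.02}) is designed to guarantee, and the supercriticality $p>\frac{n+4+2a}{n-4}$ (equivalently $\frac{8+2a}{p-1}<n-4$) ensures all the weight exponents have the right sign for the coarea/H\"older matching to close.
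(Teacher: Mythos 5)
Your proposal is correct, but it handles the radial-derivative terms in $E$ by a different device than the paper. Both arguments start from the same two pillars: the monotonicity of $E(r;0,u)$ from Theorem \ref{eq:t2.1} (so a bound along any sequence $R_k\to\infty$ suffices) and the scaling-critical volume estimates descending from (\ref{eq:2.17}), namely $\int_{B_R}u^2\le CR^{n-\frac{8+2a}{p-1}}$ and $\int_{B_R}|\nabla u|^2\le CR^{n-2-\frac{8+2a}{p-1}}$. The paper then avoids ever differentiating the boundary integrals: it writes $E(r;0,u)\le \frac{1}{r^2}\int_r^{2r}\int_t^{t+r}E(\sigma;0,u)\,d\sigma\,dt$, so that each summand of the form $\frac{d}{d\sigma}\bigl(\sigma^{\alpha}\int_{\partial B_\sigma}\Phi\bigr)$ is first integrated out to a difference of boundary terms and then, by the second averaging, converted into a weighted volume integral of $\Phi$ over an annulus $B_{3r}\setminus B_r$; consequently only $u^2$, $|\nabla u|^2$ and $(\Delta u)^2$ in the bulk are ever needed. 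You instead expand each $\frac{d}{dr}$ pointwise and select good radii by a coarea/pigeonhole argument, which forces you to bring $\partial_r^2 u$ onto the sphere and hence to establish and slice the additional estimate $\int_{B_R}|D^2u|^2\le CR^{n-4-\frac{8+2a}{p-1}}$. That estimate does follow from $\int|D^2(u\eta)|^2=\int(\Delta(u\eta))^2$ with a cutoff, and your exponent bookkeeping closes (each summand is $O(R_k^0)$ or better), so the argument is sound; it is simply heavier, trading the paper's clean double average for a good-slice selection plus a second-derivative bound. Do make sure, when writing it up, that the three surface bounds are arranged to hold simultaneously on a positive-measure set of radii in each dyadic shell (enlarge the constants so the three exceptional sets cannot cover $(R,2R)$), and that the final passage from ``$E(R_k;0,u)\le C$ for some $R_k\to\infty$'' to ``$\lim_{r\to\infty}E(r;0,u)<+\infty$'' explicitly invokes that $E$ is non-decreasing, as you indicate.
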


\begin{proof}
From Theorem \ref{eq:t2.1}, we see that $E(r;0,u)$ is non-decreasing in $r$. Properties of the integral yields
\begin{equation*}
E(r; 0,u) \le \dfrac{1}{r}\int_r^{2r} E(\sigma; 0,u) d \sigma \le \dfrac{1}{r^2} \int_r^{2r} \int_t^{t+r} E(\sigma; 0,u)d \sigma dt.
\end{equation*}From (\ref{eq:2.17}). we have
\begin{align*}
& \dfrac{1}{r^2} \int_r^{2r} \int_t^{t+r} \left ( \sigma^{\frac{4(p+1)+2a}{p-1}-n} \int_{B_{\sigma}} \dfrac{1}{2}
(\Delta u)^2-\dfrac{1}{p+1}|x|^a |u|^{p+1} \right ) d \sigma dt \\
& \le \dfrac{C}{r^2} \int_r^{2r} \int_t^{t+r}  \sigma^{\frac{4(p+1)+2a}{p-1}-n} \sigma^{n-\frac{4(p+1)+2a}{p-1}}d \sigma dt \\
& \le C.
\end{align*}A simple application of H\"{o}lder's inequality and (\ref{eq:2.17}) to get
\begin{align*}
& \dfrac{1}{r^2} \int_r^{2r} \int_t^{t+r} \left (\sigma^{\frac{8+2a}{p-1}+1-n} \int_{\partial B_r} u^2 \right )d \sigma dt \\
& = \dfrac{1}{r^2} \int_r^{2r} \int_t^{t+r} \sigma^{\frac{8+2a}{p-1}+1-n}\int_{\partial B_{\sigma}}
\sigma^{-\frac{2a}{p+1}}\cdot \sigma^{\frac{2a}{p+1}}u^2 d \sigma dt \\
& \le \dfrac{1}{r^2} \int_r^{2r} \left ( \int_{B_{t+r}\backslash B_t} \left ( |x|^{\frac{8+2a}{p-1}+1-n-\frac{2a}{p+1}} \right
)^{\frac{p+1}{p-1}} \right )^{\frac{p-1}{p+1}} \left ( \int_{B_{3r}} |x|^a |u|^{p+1} \right )^{\frac{2}{p+1}} \\
& \le \dfrac{C}{r^2} \int_r^{2r} \left (|x|^{\frac{8+2a}{p-1}+1-n-\frac{2a}{p+1}+n\frac{p-1}{p+1}} \right )r^{\frac{2}{p+1}\left
[n-\frac{4(p+1)+2a}{p-1} \right ]}\\
& \le C.
\end{align*}Again applying H\"{o}lder's inequality, we find
\begin{align*}
\int_{B_r} |\nabla u|^2 \le & Cr^2 \int_{B_r} |\Delta u|^2 +Cr^{-2} \int_{B_r} u^2 \\
\le & Cr^2 \int_{B_r} |\Delta u|^2 +Cr^{-2}\left (\int_{B_r}|x|^{-\frac{2a}{p-1}} \right )^{\frac{p-1}{p+1}}
\left (\int_{B_r}|x|^a|u|^{p+1} \right )^{\frac{2}{p+1}}\\
\le & Cr^{n-\frac{8+2a}{p-1}-2}.
\end{align*}Then from the above inequality, it implies that
\begin{align*}
& \dfrac{1}{r^2} \int_r^{2r} \int_t^{t+r} \dfrac{d}{d \sigma} \left (\sigma^{\frac{8+2a}{p-1}+4-n} \int_{\partial B_{\sigma}}
|\nabla u|^2 \right ) d\sigma dt \\
& =\dfrac{1}{r^2} \int_r^{2r} \left \{(t+r)^{\frac{8+2a}{p-1}+4-n} \int_{\partial B_{t+r}} |\nabla u|^2-
t^{\frac{8+2a}{p-1}+4-n} \int_{\partial B_t} |\nabla u|^2\right \}\\
& \le \dfrac{C}{r^2} \int_{B_{3r}\backslash B_r} |x|^{\frac{8+2a}{p-1}+4-n} |\nabla u|^2 \\
& \le C
\end{align*}and
\begin{align*}
\dfrac{1}{r^2} & \int_r^{2r} \int_t^{t+r} \dfrac{\sigma^3}{2} \dfrac{d}{d \sigma} \left [
\sigma^{\frac{8+2a}{p-1}+1-n} \int_{\partial B_{\sigma}} \Big ( \dfrac{4+a}{p-1} \sigma^{-1}u+\dfrac{\partial u}{\partial r}
\Big )^2 \right ] d \sigma dt \\
= & \dfrac{1}{2r^2}\int_r^{2r} \Big \{(t+r)^{\frac{8+2a}{p-1}+4-n} \int_{\partial B_{t+r}}
\Big (\dfrac{4+a}{p-1}(t+r)^{-1}u+\dfrac{\partial u}{\partial r} \Big )^2 \\
& -
t^{\frac{8+2a}{p-1}+4-n} \int_{\partial B_r}
\Big (\dfrac{4+a}{p-1}t^{-1}u+\dfrac{\partial u}{\partial r} \Big )^2\Big \} \\
& -\dfrac{3}{2r^2} \int_r^{2r} \int_t^{t+r} \sigma^{\frac{8+2a}{p-1}+3-n} \int_{\partial B_{\sigma}}\left (\dfrac{4+a}{p-1}
\sigma^{-1} u+\dfrac{\partial u}{\partial r} \right )^2 d \sigma dt \\
\le & \dfrac{C}{r^2} \int_{B_{3r}\backslash B_r} |x|^{\frac{8+2a}{p-1}+2-n} \left (u^2 +
|x|^2 \left (\dfrac{\partial u}{\partial r} \right )^2 \right ) \\
\le & C.
\end{align*}Similarly, we can discuss the boundedness of the remaining terms in $E(r;0,u)$ and obtain the desired result.
\end{proof}

\begin{lemma}\label{eq:l4.2}
$u^{\infty}$ is homogeneous.
\end{lemma}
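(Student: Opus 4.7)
The plan is to prove that the blow-down limit $u^\infty$ satisfies the pointwise radial identity
$$\frac{4+a}{p-1}\,|x|^{-1}u^\infty + \frac{\partial u^\infty}{\partial r}=0 \quad \text{a.e. in } \mathbb{R}^n\setminus\{0\},$$
since, integrating along each ray, this is equivalent to $u^\infty(\tau x)=\tau^{-(4+a)/(p-1)}u^\infty(x)$, i.e.\ homogeneity of degree $-(4+a)/(p-1)$. This is exactly the conclusion supplied by Corollary \ref{eq:c2.1} applied to $u^\infty$.

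First I would record the scale covariance of the monotonicity energy: a change of variables $y=\tau x$ in every bulk and boundary integral entering (\ref{eq:2.02}), together with the choice of the exponents $\beta=(4+a)/(p-1)$ built into $u^\tau$, shows that $E(r;0,u^\tau)=E(\tau r;0,u)$ for every $r,\tau>0$. Combined with Theorem \ref{eq:t2.1} (monotonicity in $r$) and Lemma \ref{eq:l4.1} (finite upper bound), the limit $L:=\lim_{r\to+\infty} E(r;0,u)$ exists and is finite, and for any fixed $0<r_1<r_2$,
$$E(r_2;0,u^\tau)-E(r_1;0,u^\tau)=E(\tau r_2;0,u)-E(\tau r_1;0,u)\longrightarrow L-L=0.$$
Integrating the differential inequality (\ref{eq:2.03}) applied to $u^\tau$ on $[r_1,r_2]$ and switching to Cartesian coordinates in the dissipation term yields
$$c(n,p,a)\int_{B_{r_2}\setminus B_{r_1}} \frac{\bigl(\tfrac{4+a}{p-1}|x|^{-1}u^\tau+\partial_r u^\tau\bigr)^{2}}{|x|^{n-2-(8+2a)/(p-1)}}\,dx \;\le\; E(r_2;0,u^\tau)-E(r_1;0,u^\tau)\;\longrightarrow\;0.$$

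Finally I would pass to the limit in the dissipation integral. On the annulus $B_{r_2}\setminus B_{r_1}$ the weights $|x|^{-1}$ and $|x|^{-(n-2-(8+2a)/(p-1))}$ are bounded, and since $u^\tau\to u^\infty$ strongly in $W^{1,2}_{\mathrm{loc}}(\mathbb{R}^n)$ (as recorded in the discussion preceding Lemma \ref{eq:l4.1}), both $|x|^{-1}u^\tau$ and $\partial_r u^\tau$ converge in $L^2(B_{r_2}\setminus B_{r_1})$. Hence the integrand converges in $L^1$ to its analogue for $u^\infty$, and the limit must vanish:
$$\int_{B_{r_2}\setminus B_{r_1}} \frac{\bigl(\tfrac{4+a}{p-1}|x|^{-1}u^\infty+\partial_r u^\infty\bigr)^{2}}{|x|^{n-2-(8+2a)/(p-1)}}\,dx=0.$$
Since $r_1,r_2$ are arbitrary, the displayed radial identity holds a.e.\ on $\mathbb{R}^n\setminus\{0\}$, which is homogeneity of $u^\infty$. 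The main obstacle in this plan is the passage to the limit: only weak $W^{2,2}_{\mathrm{loc}}$ convergence is available for $\Delta u^\tau$, so it is essential that the dissipation in (\ref{eq:2.03}) involves only $u^\tau$ and its first radial derivative, whence strong $W^{1,2}_{\mathrm{loc}}$ convergence on annuli bounded away from the origin is exactly what is needed.
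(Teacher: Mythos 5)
Your proposal is correct and follows essentially the same route as the paper: scale invariance of $E$ plus Lemma \ref{eq:l4.1} forces the energy increments $E(\tau r_2;0,u)-E(\tau r_1;0,u)$ to vanish, the integrated form of (\ref{eq:2.03}) then kills the dissipation integral for $u^\tau$, and passing to the limit yields the radial ODE $\frac{4+a}{p-1}|x|^{-1}u^\infty+\partial_r u^\infty=0$, i.e.\ homogeneity via Corollary \ref{eq:c2.1}. Your added justification of the limit passage via strong $W^{1,2}_{\mathrm{loc}}$ convergence on annuli is a detail the paper leaves implicit, but it is the same argument.
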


\begin{proof}
From the monotonicity of $E(r;0,u)$ and Lemma \ref{eq:l4.1}, it implies that for any $0<r_1<r_2<+\infty$,
\begin{equation*}
\lim\limits_{\tau \to \infty} \left [ E(\tau r_2;0,u)-E(\tau r_1; 0,u)\right ]=0.
\end{equation*}Then applying Corollary \ref{eq:c2.1} and the scaling invariance of $E$, we get
\begin{align*}
0 & =\lim\limits_{\tau \to \infty} \left [ E(r_2;0,u^{\tau})-E(r_1;0,u^{\tau})\right ] \\
& = \lim\limits_{\tau \to \infty} \int_{r_1}^{r_2} \dfrac{d }{d \sigma}E\left (\sigma; 0,u^{\tau} \right ) d \sigma \\
& \ge \lim\limits_{\tau \to \infty} c(n,p,a) \int_{B_{r_2} \backslash B_{r_1}} \frac{\left (\frac{4+a}{p-1} \sigma^{-1}
u^{\tau}+\dfrac{\partial u^{\tau}}{\partial \sigma}\right )^2}{|x|^{n-2-\frac{8+2a}{p-1}}}dx \\
& = c(n,p,a) \int_{B_{r_2} \backslash B_{r_1}} \frac{\left (\frac{4+a}{p-1} \sigma^{-1}
u^{\infty}+\dfrac{\partial u^{\infty}}{\partial \sigma}\right )^2}{|x|^{n-2-\frac{8+2a}{p-1}}}dx,
\end{align*}where $\sigma=|x|$. Therefore, we obtain
\begin{equation*}
\dfrac{4+a}{p-1}\sigma^{-1}u^{\infty}+\dfrac{\partial u^{\infty}}{\partial \sigma}=0,\quad \mbox{a.e.}
\end{equation*}A simple computation finds
\begin{equation*}
u^{\infty}(x)=|x|^{-\frac{4+a}{p-1}}u^{\infty}\left (\dfrac{x}{|x|} \right ),\quad x \in \mathbb{R}^n \backslash \{0\},
\end{equation*}i.e., $u^{\infty}$ is homogeneous.
\end{proof}

\begin{lemma}\label{eq:l4.3}
$\lim\limits_{r \to \infty} E(r;0,u)=0$.
\end{lemma}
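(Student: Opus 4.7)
The plan is to combine the two preceding lemmas with the non-existence Theorem \ref{eq:t3.1} for homogeneous stable solutions together with the scaling identity $E(\tau r;0,u)=E(r;0,u^\tau)$ built into the definition (\ref{eq:2.02}) of $E$.

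First, I would identify $u^\infty$ as a homogeneous stable solution of (\ref{eq:1.1}) on $\mathbb{R}^n\setminus\{0\}$. Homogeneity is exactly Lemma \ref{eq:l4.2}, while stability passes to the weak $W^{2,2}_{loc}$-limit by the argument recorded just before Lemma \ref{eq:l4.1}, and the local $L^{p+1}$-integrability is inherited from the uniform bound (\ref{eq:2.17}) applied to each $u^\tau$ together with Fatou's lemma. Since $p\in\left(\tfrac{n+4+2a}{n-4},p_a(n)\right)$, Theorem \ref{eq:t3.1} then forces $u^\infty\equiv 0$.

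Next, by Lemma \ref{eq:l4.1} and the monotonicity provided by Theorem \ref{eq:t2.1}, the number $L:=\lim_{r\to\infty}E(r;0,u)$ is well-defined and finite. The scaling identity yields $L=\lim_{\tau\to\infty}E(r;0,u^\tau)$ for every fixed $r>0$. The theorem thus reduces to showing that $\lim_{\tau\to\infty}E(r;0,u^\tau)=E(r;0,u^\infty)=0$ for at least one $r>0$, i.e., that the energy is continuous along the blow-down.

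The main obstacle is that $E(r;0,u^\tau)$ contains boundary integrals (and $r$-derivatives of boundary integrals) on $\partial B_r$ of expressions involving $u^\tau$, $\nabla u^\tau$, $\Delta u^\tau$, whereas the blow-down convergence in hand is only interior: strong in $W^{1,2}_{loc}$ and in $L^q_{loc}$ for $q<p+1$, and merely weak in $W^{2,2}_{loc}$. To dispose of the traces I would integrate $E(r;0,u^\tau)$ against a smooth cutoff in $r$ supported in some interval $[r_1,r_2]\subset(0,\infty)$, converting every boundary term into a volume integral over the annulus $B_{r_2}\setminus B_{r_1}$. On this annulus---bounded away from the origin---elliptic regularity applied to $\Delta^2 u^\tau=|x|^a|u^\tau|^{p-1}u^\tau$, whose right-hand side is uniformly controlled in $L^{(p+1)/p}$ via (\ref{eq:2.17}), upgrades the convergence of $u^\tau\to 0$ to something strong enough to pass to the limit in each term of the averaged energy. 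A mean-value argument then produces $r_\tau\in[r_1,r_2]$ with $E(r_\tau;0,u^\tau)\to 0$, and comparing with $E(r_\tau;0,u^\tau)\to L$ forces $L=0$.
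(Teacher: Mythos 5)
Your overall architecture matches the paper's: identify $u^{\infty}$ as a homogeneous stable solution, kill it with Theorem \ref{eq:t3.1}, use the scaling identity $E(\tau r;0,u)=E(r;0,u^{\tau})$, and select a good radius (your cutoff-averaging is the same device as the paper's Fubini argument producing $\iota\in(1,2)$ with $\|u^{\tau}\|_{W^{2,2}(\partial B_{\iota})}\to 0$). However, there is a genuine gap at the crucial step where you claim that elliptic regularity, with the right-hand side $|x|^a|u^{\tau}|^{p-1}u^{\tau}$ controlled in $L^{(p+1)/p}_{loc}$, upgrades $u^{\tau}\to 0$ to convergence strong enough to pass to the limit in every term of the energy. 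The volume terms $\int_{B_r}(\Delta u^{\tau})^2$ and $\int_{B_r}|x|^a|u^{\tau}|^{p+1}$ require strong convergence of $\Delta u^{\tau}$ in $L^2_{loc}$ and of $u^{\tau}$ in $L^{p+1}_{loc}$, but the blow-down only converges weakly in $W^{2,2}_{loc}$ and strongly in $L^q_{loc}$ for $q<p+1$ (see (\ref{eq:4.1})). The regularity bootstrap cannot close this: a uniform bound in $W^{4,(p+1)/p}$ embeds into $W^{2,2}$ only when $n\le \frac{4(p+1)}{p-1}$, and in the supercritical regime one has precisely $n>\frac{4(p+1)+2a}{p-1}\ge\frac{4(p+1)}{p-1}$, so the embedding fails and weak $W^{2,2}$-limits may lose energy.

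The missing ingredient is stability. The paper invokes the stability-based interpolation inequality (\ref{eq:2.16}) of Lemma \ref{eq:l2.2}, applied to the (still stable) rescalings $u^{\tau}$:
\begin{equation*}
\int_{B_1}\left((\Delta u^{\tau})^2+|x|^a|u^{\tau}|^{p+1}\right)\le C\int_{B_5}(u^{\tau})^2+C\int_{B_5}|u^{\tau}v^{\tau}|,
\end{equation*}
whose right-hand side tends to zero because $u^{\tau}\to 0$ strongly in $L^2(B_5)$ while $v^{\tau}=\Delta u^{\tau}$ is merely bounded in $L^2(B_5)$ (Cauchy--Schwarz). Only after this does one apply interior $L^p$-estimates and the slice-selection to dispose of the boundary traces. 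Without routing the volume terms through (\ref{eq:2.16}) (or an equivalent use of the stability inequality), your argument does not establish $E(r;0,u^{\tau})\to 0$.
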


\begin{proof}
From Lemma \ref{eq:l4.2}, it implies that $u^{\infty}$ is a homogeneous, stable solution of (\ref{eq:1.1}).
Therefore, from Theorem \ref{eq:t3.1}, we have
\begin{equation*}
u^{\infty} \equiv 0.
\end{equation*}Combining with (\ref{eq:4.1}), we find that
\begin{equation*}
\lim\limits_{\tau \to +\infty} u^{\tau} =0,\;\ \mbox{stongly}\; \mbox{in}\; \ L^2(B_5(0))
\end{equation*}
implies
\begin{equation*}
\lim\limits_{\tau \to +\infty} \int_{B_5(0)} (u^{\tau})^2 =0.
\end{equation*}Combining with the uniformly bounded of $v^{\tau}$ in $L^2(B_5(0))$, we get
\begin{equation*}
\lim\limits_{\tau \to \infty} \int_{B_5(0)} |u^{\tau}v^{\tau}| \le \lim\limits_{\tau \to \infty}
\left (\int_{B_5(0)} (u^{\tau})^2 \right )^{\frac{1}{2}} \left ( \int_{B_5(0)}(v^{\tau})^2 \right )^{\frac{1}{2}}=0.
\end{equation*}Then, it implies from (\ref{eq:2.16}) that
\begin{equation}\label{eq:4.4}
\lim\limits_{\tau \to +\infty} \int_{B_1(0)} (\Delta u^{\tau})^2 +|x|^a |u^{\tau}|^{p+1} \le
C \lim\limits_{\tau \to +\infty} \int_{B_5(0)} (u^{\tau})^2+|u^{\tau} v^{\tau}| =0.
\end{equation}

Applying the interior $L^p$-estimates yields
\begin{equation*}
\lim\limits_{\tau \to +\infty} \int_{B_2(0)} \sum\limits_{k \le 2}
|\nabla^k u^{\tau}|=0.
\end{equation*}Then, we obtain
\begin{equation*}
\int_1^2 \sum\limits_{i=1}^{\infty} \int_{\partial B_r} \sum\limits_{k\le 2}
|\nabla^k u^{\tau_i}|^2 dr \le \sum\limits_{i=1}^{\infty} \int_{B_{2r}\backslash B_r}
\sum\limits_{k \le 2} |\nabla^k u^{\tau_i}|^2 \le 1.
\end{equation*}Therefore, there exists a $\iota \in (1,2)$ such that
\begin{equation*}
\lim\limits_{\tau \to \infty} \|u^{\tau}\|_{W^{2,2}(\partial B_{\iota})}=0.
\end{equation*}Combining with (\ref{eq:4.4}) and the scaling invariance of $E(r;0,u)$, we get
\begin{equation*}
\lim\limits_{i \to \infty}E(\tau_i \iota; 0,u)=\lim\limits_{i \to \infty}E(\iota;0,u^{\tau_i})=0.
\end{equation*}Again since $\tau_i \iota \to +\infty$ and $E(r;0,u)$ is non-decreasing in $r$, we have
\begin{equation*}
\lim\limits_{r \to \infty} E(r;0,u)=0.
\end{equation*}The proof is completed.
\end{proof}

\noindent {\bf Proof of Theorem \ref{eq:t1.1}}\hspace*{5pt}
We divide the proof into three cases.\vskip .1in

\noindent {\bf Case I.} The subcritical $1 < p  <\dfrac{n+4+2a}{n-4}$.\vskip .1in

Since $p <\dfrac{n+4+2a}{n-4}$ implies $n < \dfrac{4(p+1)+2a}{p-1}$, and combining with (\ref{eq:2.17}), we find
\begin{equation*}
\int_{B_R(x)} \left ( |\Delta u|^2 +|x|^a |u|^{p+1}\right ) dx \le CR^{n-\frac{4(p+1)+2a}{p-1}} \to 0,\;\ \mbox{as}\;\ R \to +\infty.
\end{equation*}Consequently, we obtain
\begin{equation*}
u \equiv 0.
\end{equation*}\vskip .1in

\noindent {\bf Case II.} The critical $p =\dfrac{n+4+2a}{n-4}$.\vskip .1in

Utilizing the inequality (\ref{eq:2.17}) once again to find
\begin{equation*}
\int_{\mathbb{R}^n} \left ( v^2+|x|^a |u|^{p+1} \right ) dx< +\infty.
\end{equation*}Then, it implies that
\begin{equation*}
\lim\limits_{R\to +\infty} \int_{B_{2R}(x)\backslash B_R(x)} \left ( v^2+|x|^a |u|^{p+1} \right ) dx =0.
\end{equation*}From (\ref{eq:2.16}), a direct application of H\"{o}lder's inequality leads to
\begin{align*}
\int_{B_R(x)} & \left ( v^2 +|x|^a |u|^{p+1} \right ) dx \le  CR^{-4} \int_{B_{2R}(x)\backslash B_R(x)} u^2 dx
+CR^{-2} \int_{B_{2R}(x)\backslash B_R(x)} |uv| dx \\[0.06cm]
\le & CR^{-4} \left (\int_{B_{2R}(x)\backslash B_R(x)}|x|^a |u|^{p+1} dx \right )^{\frac{2}{p+1}}
\left (\int_{B_{2R}(x)\backslash B_R(x)} |x|^{-\frac{2a}{p-1}} dx \right )^{\frac{p-1}{p+1}}\\[0.06cm]
& +C \mathcal{K} R^{-2} \left ( \int_{B_{2R}(x)\backslash B_R(x)} |x|^a |u|^{p+1} dx \right )^{\frac{1}{p+1}}
\left (\int_{B_{2R}(x)\backslash B_R(x)} |x|^{-\frac{2a}{p-1}} dx \right )^{\frac{p-1}{2(p+1)}}\\[0.06cm]
\le & CR^{\left (n-\frac{4(p+1)+2a}{p-1} dx \right )\frac{p-1}{p+1}}\left (
\int_{B_{2R}(x)\backslash B_R(x)}|x|^a |u|^{p+1} dx \right )^{\frac{2}{p+1}} \\[0.06cm]
& +C \mathcal{K} R^{\left (n-\frac{4(p+1)+2a}{p-1} dx \right )\frac{p-1}{2(p+1)} }\left (
\int_{B_{2R}(x)\backslash B_R(x)} |x|^a |u|^{p+1} dx \right )^{\frac{1}{p+1}},
\end{align*}where $\mathcal{K}=\left (\int_{B_{2R}(x)\backslash B_R(x)} v^2 dx \right )^{\frac{1}{2}}$.
Since $p =\dfrac{n+4+2a}{n-4}$, the right side of the above inequality tends to $0$ as $R \to +\infty$.
So we get
\begin{equation*}
u \equiv 0.
\end{equation*}\vskip .05in

\noindent {\bf Case III.} The supercritical
$\dfrac{n+4+2a}{n-4} < p < p_a(n) $.\vskip .08in

The smoothness of $u$ implies that
\begin{equation*}
\lim\limits_{r \to 0} E(r;0,u)=0.
\end{equation*}From the monotonicity
of $E(r;0,u)$ and Lemma \ref{eq:l4.3}, it implies that
\begin{equation*}
E(r;0,u)=0,\;\; \mbox{for}\; \mbox{all}\; r>0.
\end{equation*}Then, from Corollary \ref{eq:c2.1}, $u$ is homogeneous. Therefore from Theorem \ref{eq:t3.1},
we obtain
\begin{equation*}
u\equiv 0. \eqno \square
\end{equation*}

\section{Proof of Theorem \ref{eq:t1.2}.}

In this section, we study the finite Morse index solutions of (\ref{eq:1.1})
by the use of the {\it Pohozaev-type identity}, {\it monotonicity
formula} and {\it blowing down} sequence.\vskip .1in

A basic ingredient of the proof of the subcritical case in Theorem \ref{eq:t1.2} is the following {\it
Pohozaev-type identity}.

\begin{lemma}\label{eq:l5.1}
we have the equality
\begin{align}\label{eq:5.1}
& \int_{B_R} \left ( \dfrac{n-4}{2} |\Delta u|^2-\dfrac{n+a}{p+1} |x|^a |u|^{p+1} \right ) dx \nonumber \\[0.06cm]
&= \int_{\partial B_R} \left ( \dfrac{R}{2} (\Delta u)^2 -\dfrac{1}{p+1} R^{1+a}|u|^{p+1}+R \dfrac{\partial u}{\partial r}
\dfrac{\partial \Delta u}{\partial r}-\Delta u \dfrac{\partial (x\cdot \nabla u)}{\partial r} \right ) dS.
\end{align}
\end{lemma}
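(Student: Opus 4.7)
The strategy is the classical multiplier method: test the equation against $x\cdot\nabla u$ and integrate over $B_R$, then carefully track every boundary term that arises. Since the equation is $\Delta^2 u=|x|^a|u|^{p-1}u$, the identity to establish comes from
\begin{equation*}
\int_{B_R}\Delta^2 u\,(x\cdot\nabla u)\,dx=\int_{B_R}|x|^a|u|^{p-1}u\,(x\cdot\nabla u)\,dx,
\end{equation*}
so the whole argument is integration by parts on both sides.

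For the right-hand side, I would use the pointwise identity $|u|^{p-1}u\,(x\cdot\nabla u)=\frac{1}{p+1}x\cdot\nabla(|u|^{p+1})$, pull $|x|^a$ inside, and apply the divergence theorem with the vector field $|x|^a\,x$. The key computation is $\operatorname{div}(|x|^a x)=(n+a)|x|^a$, together with $x\cdot\nu=R$ and $|x|^a=R^a$ on $\partial B_R$; this yields
\begin{equation*}
\int_{B_R}|x|^a|u|^{p-1}u\,(x\cdot\nabla u)\,dx=-\frac{n+a}{p+1}\int_{B_R}|x|^a|u|^{p+1}\,dx+\frac{R^{a+1}}{p+1}\int_{\partial B_R}|u|^{p+1}\,dS,
\end{equation*}
producing precisely the two terms on the right of \eqref{eq:5.1} associated with the nonlinearity.

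For the left-hand side I would apply Green's second identity to move one Laplacian off $\Delta u$ and onto $x\cdot\nabla u$, which generates the two boundary contributions $R\,\partial_r u\,\partial_r\Delta u$ and $-\Delta u\,\partial_r(x\cdot\nabla u)$ (using that on $\partial B_R$ one has $x\cdot\nabla u=R\,\partial_r u$). I would then use the commutator $\Delta(x\cdot\nabla u)=2\Delta u+x\cdot\nabla(\Delta u)$ and treat the remaining volume integral $\int_{B_R}\Delta u\,(x\cdot\nabla\Delta u)\,dx$ via the identity $\Delta u\,(x\cdot\nabla\Delta u)=\tfrac12 x\cdot\nabla(\Delta u)^2$, followed by one more divergence-theorem step using $\operatorname{div}(x)=n$ to obtain $-\frac{n}{2}\int_{B_R}(\Delta u)^2+\frac{R}{2}\int_{\partial B_R}(\Delta u)^2\,dS$. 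Combining the pieces gives $\frac{4-n}{2}\int_{B_R}(\Delta u)^2+\frac{R}{2}\int_{\partial B_R}(\Delta u)^2\,dS$ plus the two boundary terms named above.

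Equating the two sides and rearranging so that all volume integrals sit on the left and all surface integrals on the right yields \eqref{eq:5.1} directly, after multiplying through by $-1$ to flip the sign of the $\frac{n-4}{2}(\Delta u)^2$ term. The calculation is entirely routine—there is no real obstacle beyond bookkeeping, but the one place where a sign is easy to lose is in the second integration by parts on $\int_{B_R}\Delta u\,(x\cdot\nabla \Delta u)\,dx$, so I would handle that step most carefully and double-check that the coefficient on the interior $(\Delta u)^2$ term indeed turns into $\frac{n-4}{2}$ with the correct sign.
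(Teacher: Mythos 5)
The paper states this Pohozaev-type identity without proof, and your argument is exactly the standard multiplier computation that is intended: testing $\Delta^2 u=|x|^a|u|^{p-1}u$ against $x\cdot\nabla u$, using $\operatorname{div}(|x|^a x)=(n+a)|x|^a$ on the nonlinear side and Green's identity plus $\Delta(x\cdot\nabla u)=2\Delta u+x\cdot\nabla\Delta u$ on the biharmonic side. I checked the coefficients and signs (in particular the $\frac{4-n}{2}\int_{B_R}(\Delta u)^2$ from $2-\frac{n}{2}$ and the final sign flip), and they all come out matching \eqref{eq:5.1}, so the proof is correct.
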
\vskip .1in

Applying the doubling lemma in \cite[Lemma 5.1]{Polacik},
we get the following estimates.

\begin{lemma}\label{eq:l5.2}
Let $u$ be a finite Morse index solution of (\ref{eq:1.1}). Then
there exist constants $C$ and $R^*$ such that
\begin{equation}\label{eq:5.2}
|u(x)|\le C|x|^{-\frac{4+a}{p-1}}, \;\ \mbox{for}\;\ \mbox{all}\; x \in B_{R^*}^c,
\end{equation}and
\begin{equation}\label{eq:5.3}
\sum\limits_{k\le 3} |x|^{\frac{4+a}{p-1}+k} |\nabla^ku(x)|\le C, \;\ \mbox{for}\; \mbox{all}\;\ x \in B_{3R^*}^c
\end{equation}
\end{lemma}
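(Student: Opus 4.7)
My plan is to argue by contradiction via the doubling lemma of Pol\'a\v{c}ik--Quittner--Souplet, followed by a rescaling tuned to both the doubling scale and the weight $|x|^a$, and to conclude with the Liouville-type classification of stable solutions of the unweighted fourth-order Lane--Emden equation from \cite{Davila}. Since $u$ has finite Morse index, $u$ is stable on $\mathbb{R}^n\setminus\overline{B_{R^*}}$ for some large $R^*>0$. Set $M(x):=|u(x)|^{(p-1)/(4+a)}$, so that (\ref{eq:5.2}) amounts to $M(x)|x|\le C$. Assume for contradiction that there exist $x_k\in B_{2R^*}^c$ with $M(x_k)|x_k|\to\infty$. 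Since $\mathrm{dist}(x_k,\partial B_{R^*})\ge|x_k|/2$, this gives $M(x_k)\,\mathrm{dist}(x_k,\partial B_{R^*})\to\infty$, and \cite[Lemma 5.1]{Polacik} applied with $\Gamma=\partial B_{R^*}$ produces $y_k\in B_{R^*}^c$ with $M(y_k)\ge M(x_k)$, $M(y_k)|y_k|\to\infty$, and $M(z)\le 2M(y_k)$ whenever $|z-y_k|\le k/M(y_k)$.

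The next step is to choose a rescaling that simultaneously exploits the doubling bound on arbitrarily large balls and makes the weight $|x|^a$ asymptotically constant. The choice
\begin{equation*}
\rho_k:=|u(y_k)|^{-(p-1)/4}|y_k|^{-a/4},\qquad v_k(\xi):=\tfrac{1}{u(y_k)}u(y_k+\rho_k\xi)
\end{equation*}
satisfies the key algebraic identities $\rho_k M(y_k)=(M(y_k)|y_k|)^{-a/4}$ and $\rho_k/|y_k|=(M(y_k)|y_k|)^{-(4+a)/4}$, so that $\rho_k M(y_k)\to 0$ if $a>0$ (and $\equiv 1$ if $a=0$) and $\rho_k/|y_k|\to 0$ in both cases. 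The first ensures that the doubling bound $|v_k|\le 2^{(4+a)/(p-1)}$ holds on any fixed ball $B_R$ for $k$ large, while the second implies $v_k$ solves
\begin{equation*}
\Delta^2 v_k(\xi)=\bigl(|y_k+\rho_k\xi|/|y_k|\bigr)^a|v_k|^{p-1}v_k
\end{equation*}
with coefficient tending to $1$ uniformly on compact sets. Interior elliptic estimates then yield a subsequential $C^3_{\mathrm{loc}}$-limit $v_\infty$ satisfying $\Delta^2 v_\infty=|v_\infty|^{p-1}v_\infty$ on $\mathbb{R}^n$ with $|v_\infty(0)|=1$.

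Stability passes to the limit: test functions $\eta\in C_0^4(B_R)$ pull back via $\phi(x):=\eta((x-y_k)/\rho_k)$ to functions supported in $B(y_k,R\rho_k)\subset B_{R^*}^c$ for $k$ large (using $\rho_k/|y_k|\to 0$). Using $\rho_k^4|u(y_k)|^{p-1}=|y_k|^{-a}$, the stability inequality for $u$ rescales to
\begin{equation*}
\int|\Delta\eta|^2\,d\xi\ge p\int\bigl(|y_k+\rho_k\xi|/|y_k|\bigr)^a|v_k|^{p-1}\eta^2\,d\xi,
\end{equation*}
whose limit shows $v_\infty$ is a stable entire solution. This contradicts the classification of stable solutions of the pure biharmonic Lane--Emden equation from \cite{Davila}, establishing (\ref{eq:5.2}). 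Estimate (\ref{eq:5.3}) then follows from interior Schauder estimates applied to $\tilde u(z):=R^{(4+a)/(p-1)}u(x_0+Rz/4)$ on $|z|\le 1$, with $R=|x_0|\ge 3R^*$: by (\ref{eq:5.2}) $\tilde u$ is uniformly bounded, and its equation has weight $(|x_0+Rz/4|/R)^a$ bounded away from $0$ and $\infty$; rescaling back converts the derivative bounds into (\ref{eq:5.3}).

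The main obstacle is the choice of $\rho_k$: the doubling lemma naturally supplies the scale $1/M(y_k)$, at which the $|x|^a$ weight fails to stabilize unless $a=0$; conversely, the weight-adapted scale used above is not \emph{a priori} compatible with the doubling bound. The identity $\rho_k M(y_k)=(M(y_k)|y_k|)^{-a/4}$ is precisely what reconciles the two requirements, and it does so only because the contradiction hypothesis forces $M(y_k)|y_k|\to\infty$.
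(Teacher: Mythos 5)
Your strategy coincides with the paper's: for (\ref{eq:5.3}) your Schauder rescaling of $\tilde u$ is exactly the argument given in the text (the paper uses $\omega(x)=\tau^{\frac{4+a}{p-1}}u(\widetilde x+\tau x)$ with $\tau=|\widetilde x|/2$), and for (\ref{eq:5.2}) the paper simply writes ``can be deduced as in \cite[Lemma 5.1]{Davila}'', i.e.\ it outsources precisely the doubling--rescaling--Liouville argument that you carry out in detail. Your algebra is correct: the identities $\rho_k M(y_k)=(M(y_k)|y_k|)^{-a/4}$ and $\rho_k/|y_k|=(M(y_k)|y_k|)^{-(4+a)/4}$ do reconcile the doubling bound with the stabilization of the weight, the rescaled stability inequality is computed correctly via $\rho_k^4|u(y_k)|^{p-1}=|y_k|^{-a}$, and the limit $v_\infty$ is indeed a nontrivial stable entire solution of the \emph{unweighted} equation $\Delta^2 v=|v|^{p-1}v$.

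There is, however, one step that does not close over the full range in which the lemma is used, and your explicit write-up makes it visible. The contradiction at the end requires the nonexistence of nontrivial stable entire solutions of $\Delta^2 v=|v|^{p-1}v$, which \cite{Davila} provides only for $1<p<p_0(n)$, the \emph{unweighted} fourth-order Joseph--Lundgren exponent; for $n\ge 13$ and $p\ge p_0(n)$ there exist nontrivial stable radial entire solutions, so no contradiction is available. But Lemma \ref{eq:l5.2} is invoked in the proof of Theorem \ref{eq:t1.2} for all $1<p<p_a(n)$, and for $a>0$ one has $p_a(n)>p_0(n)$ whenever both are finite (indeed $p_a(n)$ can equal $+\infty$ while $p_0(n)<\infty$: the weight raises the threshold, since $\beta=\frac{4+a}{p-1}$ increases with $a$ and hence so does $f(p)=p\ell_2$ in the supercritical range). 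On the interval $[p_0(n),p_a(n))$ your blow-up produces a stable entire solution of an equation for which the classification of \cite{Davila} is silent, and the argument stalls. This is not a defect you introduced --- the paper's one-line citation of \cite[Lemma 5.1]{Davila} hides exactly the same restriction, and it is the same phenomenon that forces Dancer--Du--Guo to work below $\overline p(a^-)$ with $a^-=\min\{0,a\}$ --- but as written your proof (like the paper's) establishes (\ref{eq:5.2}) only for $1<p<p_0(n)$, not for the full range $1<p<p_a(n)$ claimed. Everything else, including the passage from (\ref{eq:5.2}) to (\ref{eq:5.3}), is sound.
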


\begin{proof}The inequality (\ref{eq:5.2}) can be deduced as in \cite[Lemma 5.1]{Davila}.

Next, we only prove the inequality (\ref{eq:5.3}).
Take arbitrarily $\widetilde{x}$ with $|\widetilde{x}| >3R^*$ and $\tau =\dfrac{|\widetilde{x}|}{2}$, and denote
\begin{equation*}
\omega(x):=\tau^{\frac{4+a}{p-1}} u(\widetilde{x}+\tau x).
\end{equation*}From (\ref{eq:5.2}), it implies that for any $x \in B_1(0)$
\begin{equation*}
|\omega(x)|\le C \tau^{\frac{4+a}{p-1}} (|\widetilde{x}+\tau x|)^{-\frac{4+a}{p-1}} \le C_1.
\end{equation*}Then we get from the standard elliptic estimates that
\begin{equation*}
\sum\limits_{k \le 3} |\nabla^k \omega(0)| \le C_2.
\end{equation*}Noting that $\nabla^k \omega(x) =\tau^{\frac{4+a}{p-1}+k} \nabla^k u(\widetilde{x}+\tau x)$.
Therefore we conclude that
\begin{equation*}
\sum\limits_{k \le 3} |x|^{\frac{4+a}{p-1}+k}|\nabla^k u(x)| \le C_2,
\end{equation*}for all $x \in B_{3R^*}(0)^c$.
\end{proof}

\noindent {\bf Proof of Theorem \ref{eq:t1.2}.}\hspace*{5pt}
The proof consists of three cases.\vskip .1in

\noindent {\bf Case I.} The subcritical $1< p< \dfrac{n+4+2a}{n-4}$.\vskip .1in

From (\ref{eq:5.2}) and (\ref{eq:5.3}), we get the estimate of the right side in (\ref{eq:5.1}),
\begin{equation*}
\int_{\partial B_R} \dfrac{R}{2} (\Delta u)^2+\dfrac{R^{1+a}}{p+1} |u|^{p+1}+R \dfrac{\partial u}{\partial r}
\dfrac{\partial \Delta u}{\partial r}+\left |\Delta u \dfrac{\partial (x \cdot \nabla u) }{\partial r} \right |\to 0,\;\ \mbox{as}\;
R \to +\infty.
\end{equation*}

On the other hand, since $u$ is stable outside a compact set $\Omega \subset \mathbb{R}^n$, we can take
a test function $\zeta_R \in
C^4_0 (\mathbb{R}^n \backslash \Omega)$ for $R>R^*+4$ and $\Omega \subset B_{R^*}$,
\begin{equation*}
\zeta_R(x)=
\begin{cases}
0, & \mbox{if}\; |x|<R^*+1\; \mbox{or}\; |x|>2R, \\
1, & \mbox{if}\; R^*+2< |x|<R.
\end{cases}
\end{equation*}which satisfies $0 \le \zeta_R \le 1$, $\|\nabla^i \zeta_R\|_{L^{\infty}(B_{2R}\backslash B_R)} \le \dfrac{C}{R^i}$
and $\|\nabla^i \zeta_R\|_{L^{\infty}(B_{R^*+2} \backslash B_{R^*+1}) } \le C_{R^*}$, for $i=1,2,3,4$.
Then from Lemma \ref{eq:l2.2}, we have
\begin{equation*}
\int_{R^*+2<|x|<R} \left (|\Delta u|^2+|x|^a |u|^{p+1} \right ) dx \le C_{R^*}+C R^{n-\frac{4(p+1)+2a}{p-1}}.
\end{equation*}Again since $n<\dfrac{4(p+1)+2a}{p-1}$, we obtain
\begin{equation*}
\int_{\mathbb{R}^n} \left [ (\Delta u)^2 +|x|^a |u|^{p+1} \right ] dx <+\infty.
\end{equation*}Taking limit in (\ref{eq:5.1}), we obtain
\begin{equation}\label{eq:5.4}
\int_{\mathbb{R}^n} \left [ \dfrac{n-4}{2} |\Delta u|^2-\dfrac{n+a}{p+1} |x|^a |u|^{p+1}\right ]dx=0.
\end{equation}\vskip .1in

Now, we claim that
\begin{equation}\label{eq:5.5}
\int_{\mathbb{R}^n} |\Delta u|^2 dx= \int_{\mathbb{R}^n}  |x|^a |u|^{p+1} dx.
\end{equation}

Indeed, multiply the equation (\ref{eq:1.1}) with $u\zeta_R$ for
$\zeta_R \in C^4_0(B_{2R})$ which satisfies $0 \le \zeta_R \le 1$,
$\|\nabla^i \zeta_R\|_{L^{\infty}} \le \dfrac{C}{R^i}$, for $i=1,2,3,4$, and
\begin{equation*}
\zeta_R(x)=
\begin{cases}
1, & \mbox{if}\; |x|<R,\\
0, & \mbox{if}\; |x|>2R.
\end{cases}
\end{equation*}A simple computation implies
\begin{equation*}
\int_{B_R} \left ( |x|^a |u|^{p+1}-(\Delta u)^2\right ) \zeta_R dx=\int_{B_R} \left ( u \Delta u \Delta \zeta_R+
2\Delta u\nabla u \cdot \nabla \zeta_R \right ) dx:=S_1(R)+S_2(R).
\end{equation*}We may use H\"{o}lder's inequality in $S_1(R)$ and $S_2(R)$ to obtain
\begin{align*}
|S_1(R)| \le & R^{-2} \int_{B_R} |\Delta u| \left ( |x|^{\frac{a}{p+1}}|u| \right )|x|^{-\frac{a}{p+1}} \\
\le & R^{-2}\left ( \int_{B_{2R}} (\Delta u)^2 \right )^{\frac{1}{2}}\left ( \int_{B_{2R}}
|x|^a |u|^{p+1} \right )^{\frac{1}{p+1}} \left ( \int_{B_{2R}} |x|^{-\frac{2a}{p-1}}
\right )^{\frac{p-1}{2(p+1)}} \\
\le & C R^{\frac{n(p-1)}{2(p+1)} -\frac{a}{p+1}-2} \left ( \int_{B_{2R}} (\Delta u)^2 \right )^{\frac{1}{2}}
\left ( \int_{B_{2R}}
|x|^a |u|^{p+1} \right )^{\frac{1}{p+1}} \\
\le & CR^{\frac{p-1}{2(p+1)}\left (n-\frac{4(p+1)+2a}{p-1}\right )}
\end{align*}and
\begin{align*}
|S_2(R)| = & \int_{B_{2R}}|\Delta u|\cdot |\nabla u| \cdot |\nabla \zeta_R| \le \left ( \int_{B_{2R}}(\Delta u)^2 \right )^{\frac{1}{2}}
\left ( \int_{B_{2R}}|\nabla u|^2 |\nabla \zeta_R|^2 \right )^{\frac{1}{2}} \\
= & \left (\int_{B_{2R}} (\Delta u)^2 dx \right )^{\frac{1}{2}} \left (\int_{B_{2R}} u(- \Delta u) |\nabla \zeta_R|^2+
\dfrac{1}{2} \int_{B_{2R}} u^2 \Delta ( |\nabla \zeta_R|^2) \right )^{\frac{1}{2}}\\
\le & C \left (\int_{B_{2R}} (\Delta u)^2 dx \right )^{\frac{1}{2}} \left (\int_{B_{2R}} |u||\Delta u| |\nabla \zeta_R|^2
\right )^{\frac{1}{2}} \\
& +C \left (\int_{B_{2R}} (\Delta u)^2 dx \right )^{\frac{1}{2}}\left (
\int_{B_{2R}} |x|^a |u|^{p+1} \right )^{\frac{1}{p+1}} \left (
\int_{B_{2R}} |x|^{-\frac{2a}{p-1}} \right )^{\frac{p-1}{2(p+1)}} \\
\le & CR^{\left [n-\frac{4(p+1)+2a}{p-1} \right ]\frac{p-1}{2(p+1)}}.
\end{align*}In the above, we use the results in (\ref{eq:2.15}), (\ref{eq:5.2}) and (\ref{eq:5.3}).
Since $n < \dfrac{4(p+1)+2a}{p-1}$, we get
\begin{equation*}
\lim\limits_{R \to +\infty}S_1 (R)=0\;\ \mbox{and}\;\
\lim\limits_{R \to \infty} S_2(R)=0.
\end{equation*}Thus, the claim (\ref{eq:5.5}) holds.

Combining (\ref{eq:5.4}) with (\ref{eq:5.5}), this leads to
\begin{equation*}
\left (\dfrac{n-4}{2}-\dfrac{n+a}{p+1} \right )\int_{\mathbb{R}^n} |u|^{p+1} dx =0.
\end{equation*}Thus we get that
\begin{equation*}
u\equiv 0.
\end{equation*}\vskip .1in

\noindent {\bf Case II.} The critical $n=\dfrac{4(p+1)+2a}{p-1}$.\vskip .1in

Since $u$ is stable outside $B_{R^*}$, we adopt the similar argument as in the subcritical case and find
\begin{equation*}
\int_{B_R \backslash B_{3R^*}}\left [ (\Delta u)^2 +|x|^{\alpha}|u|^{p+1} \right ] dx\le C, \;\ \mbox{for}\; R>3R^*
\end{equation*}and
\begin{equation*}
\int_{\mathbb{R}^n} \left [ (\Delta u)^2 +|x|^{\alpha} |u|^{p+1} \right ] dx < +\infty.
\end{equation*}The elliptic regularity theory implies
\begin{equation*}
\lim\limits_{R \to \infty} \int_{B_{2R}\backslash B_R} R^{-1} |\nabla u|
+R^{-2} |u|=0.
\end{equation*}Therefore, it is easy to verify that
\begin{equation*}
\int_{R^n} (\Delta u)^2 -|x|^{\alpha} |u|^{p+1}=0.
\end{equation*}\vskip .1in

\noindent {\bf Case III.} The supercritical $\dfrac{n+4+2a}{n-4}<p<p_a(n)$. \vskip .1in

\noindent {\it Claim I.}
There exists a constant $C$ such that for all $r>3R^*$, $E(r;0,u)\le C$.\vskip .1in

Indeed, applying the inequality (\ref{eq:5.2}) and (\ref{eq:5.3}), we obtain
\begin{align*}
E(r;0,u) \le & Cr^{\frac{4(p+1)+2a}{p-1}-n} \int_{B_r} (\Delta u)^2 +|x|^a |u|^{p+1} \\
& +C r^{\frac{8+2a}{p-1}+1-n} \int_{\partial B_r} u^2 +Cr^{\frac{8+2a}{p-1}+3-n} \int_{\partial B_r} |\nabla u|^2 \\
& +Cr^{\frac{8+2a}{p-1}+2-n} \int_{\partial B_r} |u||\nabla u|+Cr^{\frac{8+2a}{p-1}+4-n} \int_{\partial B_r} |\nabla u|
|\nabla^2 u| \\
\le & C,
\end{align*}for all $r >3R^*$, where $C$ does not depend on $r$.

Now, we can apply Theorem \ref{eq:t2.1} to get
\begin{align*}
\dfrac{d}{dr} E(r;0,u) \ge & c(n,p,a) r^{-n+2+\frac{8+2a}{p-1}} \int_{\partial B_r} \left (
\dfrac{4+a}{p-1} r^{-1} u+\dfrac{\partial u}{\partial r} \right )^2 \\
= & c(n,p,a) \int_{\partial B_r} \dfrac{ \left ( \frac{4+a}{p-1} r^{-1}u+\frac{\partial u}{\partial r}
\right )^2}{r^{n-2-\frac{8+2a}{p-1}}}.
\end{align*}Integrating the above inequality from $3R^*$ to $+\infty$ in both sides and combining with {\it Claim I}, we find
\begin{equation}\label{eq:5.7}
\int_{B_{3R^*}^c} \dfrac{ \left ( \frac{4+a}{p-1} \sigma^{-1}u+\frac{\partial u}{\partial \sigma}
\right )^2}{|x|^{n-2-\frac{8+2a}{p-1}}}< +\infty.
\end{equation}\vskip .1in

\noindent {\it Claim II.}
$\lim\limits_{r \to +\infty} E(r;0,u)=0$.\vskip .1in

Indeed, for $\tau>0$, we define a {\it blowing down} sequence
\begin{equation*}
u^{\tau}(x):=\tau^{\frac{4+a}{p-1}} u(\tau x).
\end{equation*}It implies from Lemma \ref{eq:l5.2} that $u^{\tau}$ is uniformly
bounded in $C^5 \left (B_r(0)\backslash B_{1/r}(0) \right )$ for any
fixed $r>1$, and $u^{\tau}$ is stable outside $B_{r/\tau} (0)$. Then there exists a function $u^{\infty}$ in $C^4 \left
(\mathbb{R}^n \backslash \{0\} \right )$ such that $u^{\infty}$ is a stable solution of (\ref{eq:1.1}) in $\mathbb{R}^n \backslash \{0\}$.
For any $r>1$, we get from (\ref{eq:5.7}) that
\begin{align*}
& \int_{B_{r} \backslash B_{1/r}} \dfrac{ \left ( \frac{4+a}{p-1} \sigma^{-1}u^{\infty}+\frac{\partial u^{\infty}}{\partial \sigma}
\right )^2}{|x|^{n-2-\frac{8+2a}{p-1}}} \\
& =\lim\limits_{\tau \to \infty} \int_{B_{r} \backslash B_{1/r}} \dfrac{ \left ( \frac{4+a}{p-1} \sigma^{-1}u^{\tau}+\frac{\partial u^{\tau}}{\partial \sigma}
\right )^2}{|x|^{n-2-\frac{8+2a}{p-1}}}\\
& =\lim\limits_{\tau \to \infty} \int_{B_{\tau r} \backslash B_{\tau /r}} \dfrac{ \left ( \frac{4+a}{p-1} \sigma^{-1}u+\frac{\partial u}{\partial \sigma}
\right )^2}{|x|^{n-2-\frac{8+2a}{p-1}}} \\
& =0.
\end{align*}From Corollary \ref{eq:c2.1}, we conclude that
$u^{\infty}$ is a homogeneous, stable solution of (\ref{eq:1.1}).
Then we get from Theorem \ref{eq:t3.1} that
\begin{equation*}
u^{\infty} \equiv 0.
\end{equation*}
Consequently, from the definition of the blowing down sequence $u^{\tau}(x)$ and the argument as in
(\ref{eq:5.3}), we get
\begin{equation*}
\lim\limits_{|x| \to \infty} |x|^{\frac{4+a}{p-1}} |u(x)|=0,
\end{equation*}and
\begin{equation*}
\lim\limits_{|x|\to \infty} \sum\limits_{k \le 4} |x|^{\frac{4+a}{p-1}+k} |\nabla^k u(x)|=0.
\end{equation*}
For any $\epsilon >0$ and $R_0>0$, we find
\begin{equation*}
\sum\limits_{k \le 4} |x|^{\frac{4+a}{p-1}+k} |\nabla^k u(x)| \le \epsilon
\end{equation*}for all $|x|>R_0$. Then for $r \gg R_0$, we have
\begin{align*}
E(r;0,u) \le & Cr^{\frac{4(p+1)+2a}{p-1}-n} \left \{ \int_{B_{R_0}(0) \cup [B_r(0) \backslash B_{R_0}(0)]}
\left [ (\Delta u)^2 +|x|^a |u|^{p+1} \right ]
 \right \} \\
& +C\epsilon r^{\frac{8+2a}{p-1}+1-n} \int_{\partial B_r(0)} |x|^{-\frac{8+2a}{p-1}} \\
\le & C(R_0) \left ( r^{\frac{4(p+1)+2a}{p-1}-n} +\epsilon \right ).
\end{align*}From the inequality $n > \dfrac{4(p+1)+2a}{p-1}$ and the arbitrary of $\epsilon$, we conclude that {\it Claim II}
holds.\vskip .1in

From the smoothness of $u$, it is easy to see that
\begin{equation*}
\lim\limits_{r \to 0} E(r;0,u)=0.
\end{equation*}Hence, from {\it Claim II} and the monotonicity of $E$, we get
\begin{equation*}
u \equiv 0.\eqno \square
\end{equation*}

\vskip .1in

\noindent {\bf Acknowledge:} We thank Department of mathematics, The Chinese University of Hong Kong for its kind hospitality.

\end{document}